\documentclass{article}
\usepackage{amsmath,amsthm,amssymb, bbm, array, graphicx, tikz}
\usepackage{mathptmx,graphicx}
\usepackage{bm}

\usepackage[margin=1in]{geometry}

\pagestyle{plain}

\newenvironment{tpic}{\begin{tikzpicture}}{\end{tikzpicture}}

\newtheorem*{theorem*}{Theorem}
\newtheorem{theorem}{Theorem}[section]
\newtheorem{proposition}[theorem]{Proposition}
\newtheorem{corollary}[theorem]{Corollary}
\newtheorem{lemma}[theorem]{Lemma}
\newtheorem{remark}[theorem]{Remark}

 \topmargin-2cm

\newcommand{\cref}[1]{Corollary~\textup{\ref{#1}}}

\newcommand{\M}{\textmd{MCG}}

\newcommand{\C}{\mathbb{C}}

\newcommand{\gl}{\mathfrak{gl}}
\newcommand{\Sl}{\mathfrak{sl}}
\newcommand{\g}{\mathfrak{g}}
\newcommand{\h}{\mathfrak{h}}
\newcommand{\Hom}{\textmd{Hom}}

\title{On TQFT representations of mapping class groups with boundary}
\author{Greg Kuperberg; Shuang Ming}
\date{}

\begin{document}
 \maketitle% prints the title block

\begin{abstract}
We study the TQFT mapping class group representations for surfaces with boundary associated with the $SU(2)$ gauge group, or equivalently the quantum group $U_q(\Sl(2))$.  We show that at a prime root of unity,
these representations are all irreducible.   We also examine braid
group representations for transcendental values of the quantum
parameter, where we show that the image of every mapping class group
is Zariski dense.
\end{abstract}
\section{Introduction}

Given a integer $p\ge 3$, $A$ a primitive $4p$-th root of unity, we can associate a vector space to a surface with its boundary colored by nonnegative integers less than $p-1$. The mapping class group of the surface act on the vector space projectively. These representations are called $SU(2)$-TQFT representations and have many good properties \cite{FLW1} and it is used to study mapping class groups and provide interesting examples in both geometric topology \cite{KR16} and quantum algebra \cite{GM17}.

In this paper, we are studying the image of mapping class groups in their $SU(2)$-TQFT representations. Usually we understand this question in the following steps: We start with irreduciblity, so we can restrict the image into blocks. Then we check the (in)finiteness of the image. If the image is finite then we have a finite quotient(e.g. Weil representation). Otherwise, we study its closure under Zariski topology or Analytic topology.

The irreducibility problem is first studied by Roberts \cite{R}. Roberts proved when $p$ is an odd prime, the representations are irreducible for closed surfaces. After that, Korinman \cite{Kor2} studied non-prime case: He showed that the representations are irreducible when $p$ is the product of two distinct odd primes and when $p$ is square of an odd prime, and for some other $p$, decomposable examples have been given. In \cite{GG}, they first studied the cases that are with boundary. They proved when $p$ is an odd prime, the representations of any surfaces with one boundary component is irreducible. Recently, Koberda and Santharoubane  \cite{KR17} considered the cases that are with boundaries, and they proved that for any $p$, if one of the boundaries is colored by $1$, the representation can be shown irreducible.

The (in)finiteness problem is first studied by Funar\cite{Funar:mapping}. Funar showed that when $p\ne 2, 3, 4, 6, 10$, the image of $B_3$ in $PSL(V_{0, 3; 1, 1, 1, 3})$ is infinite. It implies for all representations containing this vector space as a tensor-factor of a subspace, the images of the mapping class groups will be infinite. After that, in \cite{Kor}, Korinman gives a criterion for (in)finiteness, which can be checked by hand for surfaces with low genus and small number of boundary components. Especially, Korinman gave the computation for the case $p$ is odd prime, $g=1$ and $b=1$.

By the results of Funar and Korinman, in most cases, the image of the representation is infinites. The closure problem is studied by Freedman, Larsen, Wang \cite{FLW}\cite{LW:so3} and the first author \cite{Ku}. In \cite{FLW} and \cite{Ku}, The Jones representation of braided group is proved dense in $PSU(N)$ for $p$ not equal to $2, 3, 4, 6, 10$. In \cite{Ku}, the first author also proved when $A$ is generic, the image of the braid group is Zariski dense in $PSL(N)$. After that, In \cite{LW:so3}, Larsen and Wang showed when $p>3$ is an odd prime. The $SO(3)$-TQFT representations have dense image when genus $g\ge 2$.

It worth noting that in our setting, Jones representations of $B_n$ can be seen as a special case of $SU(2)$-TQFT representations of $\M(\Sigma_{0, n+1})$. By taking the surface to be a sphere with $(n+1)$ circle boundaries, and $n$ of its boundary components are colored by $1$.

In this paper, we study the case that surfaces are with nonempty boundary. Let $V_{g, b; \vec{c}}$ be the representation associated to the surface of genus $g$ and $b$ boundary components colored by $\vec{c}$(the precise definition will be given in the next section). We proved the following irreducibility theorem.

\begin{theorem}
$V_{g, b; \vec{c}}$ is irreducible at odd prime level under the action of $\M(\Sigma_{g, b})$ for all admissible color $\vec{c}$.
\end{theorem}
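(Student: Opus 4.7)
The plan is to induct on the complexity $\xi(g,b) = 3g - 3 + b$ of $\Sigma_{g,b}$, taking the $1$-holed torus (supplied by \cite{GG}) and the $4$-holed sphere as base cases, and arguing the inductive step via a local-to-global decomposition in the Verlinde basis.

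First I would fix a pants decomposition $P = \{\gamma_1, \dots, \gamma_\xi\}$ of $\Sigma_{g,b}$ and work in the Verlinde basis $\{e_\sigma\}$ of $V_{g,b;\vec c}$ indexed by admissible colorings $\sigma$ of $P$ extending $\vec c$. Each Dehn twist $T_{\gamma_i}$ acts diagonally with eigenvalue of the form $A^{\sigma(\gamma_i)(\sigma(\gamma_i)+2)}$, and at odd prime level $p$ the map $c \mapsto c(c+2) \pmod{4p}$ is injective on $\{0,1,\dots,p-2\}$ (one checks that the required congruence $(c-c')(c+c'+2) \equiv 0 \pmod{4p}$ is ruled out by a parity argument using that $p$ is odd). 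Hence the joint eigenspaces of $\langle T_{\gamma_1}, \dots, T_{\gamma_\xi}\rangle$ are one-dimensional, so any $\M(\Sigma_{g,b})$-invariant subspace $W$ is spanned by a subset $S$ of the Verlinde basis.

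Next I would show that $S$ is closed under the \emph{F-moves} and \emph{S-moves} that change $P$ locally across a $4$-holed sphere or $1$-holed torus subsurface $\Sigma'$ of $\Sigma_{g,b}$ cut out by $P$. With colors on $\partial \Sigma'$ fixed, the Verlinde decomposition of $V_{g,b;\vec c}$ has a tensor factor $V_{\Sigma';\vec c'}$ on which the natural inclusion $\M(\Sigma') \hookrightarrow \M(\Sigma_{g,b})$ acts, and by the base cases this action is irreducible. Therefore if $W$ contains a single $e_\sigma$, it must contain every $e_{\sigma'}$ that differs from $\sigma$ only on pants curves interior to $\Sigma'$. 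Combined with the connectedness of the Moore--Seiberg groupoid of colored pants decompositions (with $\vec c$ fixed on $\partial \Sigma_{g,b}$), this forces $S$ to be the full set of admissible colorings, so $W = V_{g,b;\vec c}$.

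The main obstacle is the base case $\Sigma_{0,4}$ with four arbitrary admissible boundary colors, which is not directly supplied by the cited literature (\cite{GG} assumes one boundary component, \cite{KR17} assumes one color is $1$). Here $\M(\Sigma_{0,4})$ is generated modulo its center by two Dehn twists: one acts diagonally on colorings of an interior curve, while the other acts by the quantum $6j$-symbol (Racah) matrix for the chosen boundary colors. The two key substeps would be to show that at a prime root of unity the $6j$-matrix has no zero entries in the admissible range, and that together with the diagonal twist spectrum it generates an irreducible subalgebra of the ambient matrix algebra. This is exactly the kind of explicit root-of-unity computation where delicate coincidences can go either way, and I expect the bulk of the technical work to lie in verifying that no such coincidence produces a proper invariant subspace; a secondary but easier issue is the bookkeeping for degenerate admissibility that yields low-dimensional local factors, which are handled by small direct special cases of the same argument.
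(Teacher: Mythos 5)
Your overall architecture --- induct by cutting along curves of a pants decomposition, rely on distinct Dehn-twist eigenvalues at odd prime level for multiplicity-freeness, and use $\Sigma_{0,4}$ and $\Sigma_{1,1}$ as base cases --- is the same family of argument as the paper's. The paper frames it a bit differently (Lemma~\ref{cross1}/Corollary~\ref{cross}): rather than working with one-dimensional joint eigenspaces of all twists in $P$, it compares two coarser decompositions into $\M(\Sigma')$-invariant subspaces coming from two overlapping subsurfaces, and proves the bipartite ``crossing graph'' is connected. Your base-case analysis for $\Sigma_{0,4}$ --- showing the $6j$-matrix has no vanishing entries in the admissible range --- is exactly what Section~4.1 does via the single-term Racah formula.

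The genuine gap is in your final inductive step. Having shown that an invariant subspace $W$ is spanned by a subset $S$ of Verlinde basis vectors and that $S$ is closed under changing the color of any \emph{single} curve of $P$, you still need: the graph on admissible colorings of $P$, with an edge whenever two colorings differ on one curve, is connected. This is \emph{not} the statement that the Moore--Seiberg groupoid (of pants decompositions) is connected; that is about moves between different $P$'s, not about colorings of a fixed $P$, so the citation does not close the gap. The coloring-connectivity fact is true but requires an honest combinatorial argument, and it is precisely what the paper does in the proof of Lemma~\ref{3}: one must show that for adjacent colors $i$ and $i-2$ on a curve $\alpha$ there is a common admissible ``partner'' color $j''$ on the neighboring curve, which forces a case analysis depending on which side of the admissibility inequalities one is on. Naively one might hope ``convexity'' of the admissible region does it, but convexity of the defining inequalities does not by itself imply lattice connectivity under axis-parallel steps; you need the interval-overlap lemma that the paper proves. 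Absent that, your proof does not go through.

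A secondary (fixable) issue: your eigenvalue of $T_{\gamma_i}$ should be $(-1)^{\sigma(\gamma_i)}A^{\sigma(\gamma_i)(\sigma(\gamma_i)+2)}$ (the sign matters), and the injectivity of $c\mapsto(-1)^cA^{c(c+2)}$ on $\{0,\ldots,p-2\}$ for $p$ an odd prime, while true, is stated rather than verified; the paper also only asserts it, so this is a minor parallel, but worth flagging since it is load-bearing for multiplicity-freeness.
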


When $A$ is not a root of unity, the TQFT is not well defined because the dimension of $V_{g, b; \vec{c}}$ will be infinite when $g\ge 1$. However, when $g=0$, this representation is well-defined because the color set is bounded by the half of the total colors on the boundary. We have the following denseness theorem which generalize an earlier result \cite{Ku} of the first author.

\begin{theorem}
\label{dense}Let $A$ be a transcendental complex number. The image of $\widetilde{PB_n}$ is Zariski dense in the algebraic group $SL(V_{0, n; \vec{c}})$.
\end{theorem}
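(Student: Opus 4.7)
The plan is to induct on $n$, using the density of the Jones braid representation from \cite{Ku} as the input for the all-ones coloring, and leveraging the first theorem of the present paper for a specialization argument that yields irreducibility at transcendental $A$.

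First I would show that $V_{0,n;\vec c}$ is an irreducible $\widetilde{PB_n}$-representation when $A$ is transcendental. Since the matrix entries of the representation are algebraic functions of $A$ and irreducibility is a Zariski-open condition on $A$, it suffices to exhibit a single value at which the representation is irreducible. The genus-zero case of the irreducibility theorem above provides this at any admissible odd prime $p$. Consequently the Zariski closure $G \subset SL(V_{0,n;\vec c})$ is reductive, and is positive-dimensional because the braid generators act with eigenvalues that are Laurent monomials in $A$ and so have infinite order when $A$ is transcendental.

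For the inductive step, I would consider the subgroup $\widetilde{PB_{n-1}} \hookrightarrow \widetilde{PB_n}$ that leaves one chosen puncture fixed. Under restriction, $V_{0,n;\vec c}$ decomposes along the admissible colors $c'$ of a small loop enclosing the fixed puncture as $\bigoplus_{c'} V_{0,n-1;\vec c_{\mathrm{new}}}$, where $\vec c_{\mathrm{new}}$ is obtained from $\vec c$ by replacing the fixed puncture's color with $c'$. By the inductive hypothesis, $\widetilde{PB_{n-1}}$ acts Zariski-densely on each summand, so $G$ contains the block-diagonal product $\prod_{c'} SL(V_{0,n-1;\vec c_{\mathrm{new}}})$. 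Running the argument with a different choice of fixed puncture yields another block-diagonal subgroup of $G$, and together with one off-diagonal transposition-style braid these subgroups should force $G = SL(V_{0,n;\vec c})$.

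The main obstacle is the concluding Lie-algebraic step: showing that the block-diagonal subalgebras arising from the different puncture choices, together with a non-block-diagonal element, actually generate $\Sl(V_{0,n;\vec c})$. Since $G$ is reductive and acts irreducibly, the remaining alternatives are that $G$ preserves a nondegenerate bilinear form or a nontrivial tensor decomposition of $V_{0,n;\vec c}$. The first is ruled out by comparing traces of explicit braid words with their inverses---they differ immediately when $\vec c$ is not self-dual, and in the self-dual case the form's type is pinned down and excluded by a direct fusion-matrix computation. The second is ruled out because the eigenvalues of a braid generator are distinct Laurent monomials in the transcendental $A$, which cannot be arranged into a tensor-product pattern. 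In both obstructions it is the transcendence of $A$---ruling out accidental algebraic coincidences among these eigenvalues and traces---that makes the rigidity argument go through.
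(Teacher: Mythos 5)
Your inductive skeleton matches the paper's, and your specialization argument for irreducibility at transcendental $A$ is a legitimate alternative route (the paper instead shows the relevant change-of-basis coefficient is a Laurent polynomial in $A$ with a single leading-degree summand, so transcendence gives nonvanishing directly), though you should note that the dimension of $V_{0,n;\vec c}$ is constant across the family only once the level $p$ is taken large enough that the cutoff $i+j+k\le 2p-4$ never bites.

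However, three load-bearing pieces are missing. First, the base case: invoking density of the Jones representation only covers the all-ones coloring, whereas an induction on the number of punctures must start at some fixed $n$ with \emph{arbitrary} admissible colorings. The paper's Lemma 8.1 establishes density for $n=4$ and any $a,b,c,d$ by analyzing the eigenvalue pattern $A^{i^2}$ of a Dehn twist together with Speyer's lemma on weight diagrams, concluding the closure must be $\Sl_{n+1}$; your proposal has no substitute for this. Second, your step "so $G$ contains the block-diagonal product $\prod_{c'} SL(V_{0,n-1;\vec c_{\mathrm{new}}})$" does not follow merely from surjectivity onto each factor. You need the noncommutative Chinese remainder theorem, which requires the summands to be pairwise non-isomorphic and non-dual as projective $\widetilde{PB_{n-1}}$-representations; the paper proves exactly this in Lemma 8.3 by comparing Dehn-twist eigenvalue spectra on the summands, and your proposal silently assumes it. Third, the concluding Lie-algebraic step: the dichotomy "$G$ preserves a bilinear form or a tensor decomposition" is not exhaustive for irreducible reductive subgroups of $SL(V)$ --- there are also plethysm-type embeddings and exceptional cases such as $E_6$ in its $27$-dimensional representation (which preserves a cubic, not a bilinear, form), none of which your trace and eigenvalue observations rule out. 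The paper instead proves and applies Theorem 7.4, whose proof works inside $\Sl(V)$ with the off-diagonal blocks $W_j\otimes W_k^*$ and shows the associated directed graph is complete; applying it also requires checking that at most one summand has dimension $1$ and at most one has dimension $2$, a condition the paper verifies by a short tree-cutting argument and your proposal never addresses. Without these three ingredients the argument does not close.
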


The structure of this paper is the following. In section 2, we discuss the background for $SU(2)$-TQFT representations. In section 3 to 6, we prove the irreducibility theorem. In section 7, we devolope a set of tools for studying group homomorphisms. In section 8, we prove the Zariski denseness theorem.

\section{Background}
In this section, we give a short introduction to the background of $SU(2)$-TQFT representations. One can find full explanation in \cite{BHMV}.

\subsection{The space $V_{p, g, b; \vec{c}}$}
Given an integer $p\ge 3$, let $A\in \C$ be a primitive 4p-th root of unity. Using the Kauffman skein relation(see picture below), we can associate a link $L\in S^3$ an complex number $<L>_p$.

$$\begin{tpic}[baseline=-.5ex, scale=0.5]
\draw[line width=1pt] (-1,1) -- (1,-1); \draw[line width=1pt] (-1,-1) -- (-0.1,-0.1); \draw[line width=1pt] (0.1,0.1) -- (1,1);
\end{tpic}
=
-A
\begin{tpic}[baseline=-.5ex, scale=0.5]
\draw[line width=1pt] (-1,-1) arc (-45:45:1.414); \draw[line width=1pt] (1,1) arc (135:225:1.414);
\end{tpic}
-A^{-1}
\begin{tpic}[baseline=-.5ex, scale=0.5]
\draw[line width=1pt] (1,-1) arc (45:135:1.414); \draw[line width=1pt] (-1,1) arc (225:315:1.414);
\end{tpic}; \hspace{10mm}
\begin{tpic}[baseline=-.5ex, scale=0.5]
\draw (0,0)[line width=1pt] circle (1);
\end{tpic}
= -A^{2}-A^{-2}.$$

Choose a handlebody of genus $g$ and let $C_g$ be the vector space spanned by the isotopy classes of framed links together with empty links. We fix two of such genus $g$ handlebody $H_g^1$, $H_g^2$ and a glueing:
$$H_g^1\bigcup_{\partial H_g^1 \rightarrow\partial H_g^2} H_g^2 \cong S^3.$$
This gives us a bilinear form
$$(.,.)_{g, p}^H: C_g \times C_g \rightarrow \C$$
by composing with the evaluation $<>_p$ in $S^3$. We define the space $V_{p, g}$ as $C_g$ mod out by the kernel of $<.,.>$.
It was proved in \cite{BHMV} that the spaces $V_{p, g}$ are finite-dimensional, and a set of basis can be given as following.

We choose a trivalent graph $\Gamma$ such that the handlebody $H_g$ retracts to $\Gamma$, and color the edges from the set $\{0, 1,..,p-2\}$. We say a coloring is \textit{admissible} if at each trivalent vertex, the color of the adjacent edges $(i, j, k)$ satisfies:
\begin{enumerate}
\item[i)] $|i-j|\le k\le i+j$,
\item[ii)] $i+j+k\le 2p-4$,
\item[iii)] $i+j+k=0$ mod $2$
\end{enumerate}
A representative of such a colored graph in $C_g$ is given by Jone-Wenzl's idempotent. At each edges colored by $c$, we put $c$ parallel string with an idempotent $f_c$. At vertices colored by $(i, j, k)$, we connect the projectors with the link below.
$$
\begin{tpic}[line width=1pt, baseline=-.5ex]
\draw(0,0)--node[right]{$i$} (0,1);
\draw(0,0)--node[above]{$j$} (-0.87, -0.5);
\draw(0,0)--node[above]{$k$} (0.87, -0.5);
\end{tpic}
=
\begin{tpic}[line width=1pt, baseline=-.5ex]
\draw(0,1) node[rectangle, draw] (x) {$f_i$};
\draw(-0.87, -0.5) node (y)[rectangle, draw]{$f_j$};
\draw(0.87, -0.5) node (z)[rectangle, draw]{$f_k$};
\draw(-1.4, -1)--node[above]{$j$}(y);
\draw(1.74, -1) --node[above]{$k$}(z);
\draw(0,2)--node[right]{$i$}(x);
\draw(x) --node[right]{$\frac{i-j+k}{2}$} (z);
\draw(z) --node[below]{$\frac{-i+j+k}{2}$} (y);
\draw(y) --node[left]{$\frac{i+j-k}{2}$}(x);
\end{tpic}
$$

This set of basis can be generalize to the surfaces with non-empty boundary. Let $V_{p, g, b; \vec{c}}$ be the vector space generated by the coloring of a fixed connected uni-trivalent graph $\Gamma_{g, b}$ satisfying the following three conditions.
\begin{enumerate}
\item[1)] The uni-trivalent graph $\Gamma_{g, b}$ is the retraction of $H_g$ and have $b$ univalent vertexes.
\item[2)] The colors of the edges connected to the univalent vertexes are $\vec{c}$.
\item[3)] At each trivalent vertexes, the coloring are admissible.
\end{enumerate}
For example, $V_{p, 4; 1, 1, 3, 3}$ is spaned by the following set of basis.

$$
V_{0, 4; 1, 1, 3, 3}=\Bigg \langle
\begin{tpic}[baseline=-.5ex, scale=0.7]
\draw[line width=1pt] (-0.707, 0.707) node[left]{1}--(0,0)--(-0.707, -0.707)node[left]{1};
\draw[line width=1pt] (0,0)--node[above]{2}(1,0);
\draw[line width=1pt] (1.707, 0.707) node[right]{3}--(1,0)--(1.707, -0.707)node[right]{3};
\end{tpic}
;
\begin{tpic}[baseline=-.5ex, scale=0.7]
\draw[line width=1pt] (-0.707, 0.707) node[left]{1} to[bend left=60] (-0.707, -0.707)node[left]{1};
\draw[line width=1pt] (1.407, 0.707) node[right]{3} to[bend right=60] (1.407, -0.707)node[right]{3};
\end{tpic}
\Bigg \rangle
$$

\begin{remark}The vector spaces we defined are closely related to the representation category of quantum group $SU(2)_{q}$. $(i, j, k)$ is an admissible triple is equivalent to that the vector space $Inv(V_i\otimes V_j\otimes V_k)$ is nonempty, where $V_i$ are Verlindre modules, e.g $V_{p, 4; 1, 1, 3, 3}\cong Inv(V_1\otimes V_1\otimes V_3 \otimes V_3)$ as vecter spaces.
\end{remark}
By the point of remark above, given a punctured surface $\Sigma_{g, b}$ and $p$, we say a coloring $\vec{c}$ is admissible if the dimension of $V_{p, g, b; \vec{c}}$ is positive.

\subsection{Mapping class group}
Let $S$ be the surface with or without boundary. The mapping class group of $S$ is defined to be
$$\M(S)=Diff(S; \partial S)/\{isotopy\}.$$
In this section, we define an action of $\M(\Sigma_{g, b})$ on vector space $V_{p, g, b, \vec{c}}$.

We first view $V_{p, g, b, \vec{c}}$ as vector space spanned by the skeins in handlebody $H_g$, with strands and clasps on the boundary if $b\neq 0$. Mapping class groups are generated by Dehn twists. We define the mapping class group action by defining actions of Dehn twists.

Here we introduce a new color $\Omega$, which is defined to be
$$\Omega=\sqrt{2/p} \sin(\pi/p)\Sigma_{i=0}^{p-2} (-1)^i [i+1] \phi_i,$$
where $\phi_i$ is the strand colored by $i$.

Consider a single closed curve $\gamma$ on $\Sigma_{g, b}$. The action of the Dehn twist $D_\gamma$ on the skein space is by adding a curve $\gamma$ colored by $\Omega$ with $(-1)$ framing along the boundary of $H_g$. This action turns out to be projective.

In particular, if $\gamma$ is the boundary of a disc perpendicular to a strand colored by $i$, then $D_{\gamma}v=(-1)^i A^{i(i+2)}v$ under some central extension where $v$ denotes the vector representing the skein.

The following theorem allows us to restrict the (projective)representations of $\M(S)$ to $\M(S'\subset S)$
\begin{theorem}
\label{inclusion}\cite{BD}[The inclusion homomorphism] Let S be a closed subsurface of a surface S'. Assume that S is not homeomorphic to a closed annulus and no component of S'-S is an open disk. Let $\eta : \M(S) \rightarrow \M(S')$ be the induced map. Let $\alpha_1, ...\alpha_m$ donote the boundary components of S that bound once-punctured disks in $S'-S$ and let $\{\beta_1, \gamma_1\},...,\{\beta_n, \gamma_n\}$ denote the pairs of boundary components of S that bound annuli in $S'-S$. Then the kernel of $\eta$ is the free abelian group:
$$ker(\eta)=<T_{\alpha_1},...,T_{\alpha_m},T_{\beta_1} T^{-1}_{\gamma_1},...,T_{\beta_n} T^{-1}_{\gamma_n}>$$
In particular, if no connected component of S'-S is an open annulus, an open disk, or an open once marked disk, then $\eta$ is injective.
\end{theorem}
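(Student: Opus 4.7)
The plan is to prove the two inclusions $\ker(\eta) \supseteq K$ and $\ker(\eta) \subseteq K$ separately, where $K$ denotes the subgroup claimed in the statement. The abelian and freeness part I would dispatch at the end: all listed generators involve Dehn twists about pairwise disjoint simple closed curves, so they commute by the standard disjointness principle for Dehn twists, and $\Z$-independence follows from examining their action on isotopy classes of essential arcs joining the boundary components of $S$ that support each generator.

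For $K \subseteq \ker(\eta)$, I would verify each generator directly by exhibiting an isotopy in $S'$. If $\alpha_i$ bounds a once-punctured disk in $S' \setminus S$, the Dehn twist $T_{\alpha_i}$, supported in a collar neighborhood of $\alpha_i$, extends across the added disk as a full rotation about the puncture; since a full rotation of a disk fixing its center is isotopic to the identity, $T_{\alpha_i}$ dies in $\M(S')$. If $\beta_j$ and $\gamma_j$ cobound an annulus in $S' \setminus S$, they become isotopic simple closed curves in $S'$, whence $T_{\beta_j} = T_{\gamma_j}$ in $\M(S')$ and so $T_{\beta_j} T_{\gamma_j}^{-1} \in \ker(\eta)$.

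For the reverse inclusion, my approach is to filter $\eta$ through a chain $S = S_0 \subset S_1 \subset \cdots \subset S_k = S'$ in which each $S_{i+1}$ is obtained from $S_i$ by attaching one connected component of $S' \setminus S$, and to compute the kernel of each single-step inclusion $\eta_i \colon \M(S_i) \to \M(S_{i+1})$. Attaching a once-punctured disk along $\alpha_i$ produces the capping kernel $\Z \cdot T_{\alpha_i}$; attaching an annulus identifying $\beta_j$ with $\gamma_j$ produces $\Z \cdot T_{\beta_j} T_{\gamma_j}^{-1}$; and attaching any other complementary component is injective on mapping class groups by the classical subsurface-inclusion theorem. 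An inductive exact-sequence chase along the chain, lifting each single-step kernel back to $\M(S)$ via elements whose images vanish at the appropriate stage, then assembles $\ker(\eta) = K$.

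The main obstacle is the injectivity statement in the third case: one must show that attaching a component $X$ of $S' \setminus S$ that is not an open disk, a once-punctured disk, or an annulus induces an injection $\M(S_i) \hookrightarrow \M(S_i \cup X)$. Equivalently, any self-homeomorphism of $S_i$ whose extension by the identity to $S_i \cup X$ is isotopic to the identity there must already be isotopic to the identity in $S_i$ relative to $\partial S_i$. The standard argument applies the Alexander method to a cut system of curves in $S_i$ and then pushes the ambient isotopy off $X$; the push-off succeeds precisely because $X$ contains enough essential topology that no curve in $S_i$ can be forced to sweep across $X$ nontrivially. Tracking the accumulated kernel through the chain of attachments and confirming there are no hidden relations between contributions from different components constitutes the remaining technical content.
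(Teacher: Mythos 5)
The paper does not prove this statement; it is quoted from the reference \cite{BD} and used as a black box, so there is no ``paper's own proof'' to compare against. Your sketch follows the standard filtration approach (attach the components of $S' \setminus S$ one at a time and compute a single-step kernel each time), which is essentially how the result is established in the literature, and the easy inclusion $K \subseteq \ker(\eta)$ is argued correctly.

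There are, however, two genuine gaps in the hard direction as you have written it. First, for the step where a component $X$ that is not a disk, once-marked disk, or annulus is attached, you appeal to ``the classical subsurface-inclusion theorem'' --- but that \emph{is} the theorem being proved, so the invocation is circular unless you actually carry out the Alexander-method argument. What has to be shown is (i) an essential simple closed curve in $S_i$ stays essential in $S_i \cup X$ whenever $X$ is not a disk, and (ii) two essential curves in $S_i$ that become isotopic in $S_i \cup X$ were already isotopic in $S_i$ whenever $X$ is not an annulus; only then does the Alexander method with a cut system of $S_i$ yield injectivity. Your phrase ``the push-off succeeds precisely because $X$ contains enough essential topology'' restates the needed fact rather than proving it. Second, the final ``exact-sequence chase'' elides the point that $\ker(\eta_{k-1}\circ\cdots\circ\eta_0)$ is the iterated preimage $\eta_0^{-1}(\eta_1^{-1}(\cdots))$, not a product of kernels; you need the observation that each single-step kernel is generated by Dehn twists supported on curves of $\partial S$ (hence central among themselves and visible already in $\M(S)$), and that the intermediate surfaces $S_i$ never degenerate to a closed annulus, so each step's hypotheses are met. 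These are fixable, but as written they are missing.
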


\subsection{Notations and conventions}
$p$ is a fixed prime number throughout section 3 to section 6. The index $p$ in the vector space $V_{p, g, b; \vec{c}}$ will be hided for notational simplicity.

Let $[i]$ be the $i$-th quantum integer, i.e.
$$[i]=\frac{A^{2i}-A^{-2i}}{A^2-A^{-2}}$$

\section{A criterion for irreducibility}

\begin{lemma}
\label{cross1}\cite{Ku}Consider a Hermitian vector space $X$ over some field $F$ is a (projective) representation of some group $G$, and $G_1$, $G_2$ are two subgroups. Let $X=\bigoplus_I V_i$(resp. $\{X=\bigoplus_J W_{j}\}$) be an irreducible multiplicity free decomposition under the action of some central extension $\tilde{G_1}$(resp. $\tilde{G_2}$) of ${G_1}$(resp. $G_2$). Define a graph $C(X, G_1, G_2)$ on the set of irreducible summands, with an edge connecting $V_j$ and $W_k$ if there exists some element $v\in V_j$ and $w\in W_j$ such that $[v, w]\ne 0$.

If the graph is connected, then $X$ is irreducible as a representation of $G$.
\end{lemma}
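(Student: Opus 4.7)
The plan is to argue by contradiction: assume there exists a proper nonzero $G$-invariant subspace $Y \subset X$, and show that this forces the graph $C(X, G_1, G_2)$ to be disconnected. The proof uses two standard facts: in a multiplicity-free decomposition of a semisimple module, every submodule is a direct sum of a subset of the irreducible summands; and since $X$ is projectively unitary, the orthogonal complement of a $G$-invariant subspace is again $G$-invariant.

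First I would note that because $G_1 \subset G$, the subspace $Y$ is invariant under the lift $\tilde{G_1}$, so the multiplicity-free hypothesis implies $Y = \bigoplus_{i \in I_0} V_i$ for some subset $I_0 \subset I$. Analogously, $Y = \bigoplus_{j \in J_0} W_j$ for some $J_0 \subset J$. Since $Y$ is proper and nonzero, both $I_0$ and its complement $I \setminus I_0$ are nonempty (and likewise for $J_0$). Applying the same argument to the $G$-invariant subspace $Y^\perp$, we get $Y^\perp = \bigoplus_{i \notin I_0} V_i = \bigoplus_{j \notin J_0} W_j$, using the Hermitian orthogonality of distinct summands in each decomposition (which itself comes from the fact that the decompositions are into non-isomorphic irreducibles of a unitary representation).

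Next I would partition the vertex set of $C(X, G_1, G_2)$ into
\[
A = \{V_i : i \in I_0\} \cup \{W_j : j \in J_0\}, \qquad B = \{V_i : i \notin I_0\} \cup \{W_j : j \notin J_0\},
\]
both nonempty. If the graph had an edge between $V_i \in A$ and $W_j \in B$, there would be vectors $v \in V_i \subset Y$ and $w \in W_j \subset Y^\perp$ with $[v,w] \neq 0$, contradicting $Y \perp Y^\perp$. Edges between $V_i \in B$ and $W_j \in A$ are excluded symmetrically. Therefore no edges cross the partition, contradicting connectedness of $C(X, G_1, G_2)$.

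The main subtlety I would want to double-check is the passage from the projective action of $G_1$, $G_2$ to genuine actions of the central extensions $\tilde{G_1}$, $\tilde{G_2}$: one needs to know that a $G$-invariant subspace automatically lifts to a $\tilde{G_1}$-invariant subspace, which is immediate since scalars act trivially on the projective space of subspaces. The rest is a bookkeeping exercise, and the conceptual content of the lemma is really just the combination of multiplicity-freeness with the orthogonal decomposition.
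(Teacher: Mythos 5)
Your proof is correct. The paper cites this lemma from \cite{Ku} without reproducing an argument, so there is no in-paper proof to compare against; your approach is the standard one and almost certainly matches the cited source. Two implicit hypotheses are worth making explicit: the Hermitian form is assumed $G$-invariant (otherwise $Y^\perp$ need not be a $G$-invariant subspace) and nondegenerate (so that the containment $\bigoplus_{i\notin I_0} V_i \subseteq Y^\perp$ upgrades to equality by a dimension count, and so that the partition into $A$ and $B$ really covers every summand exactly once). Both hold in the TQFT setting where the lemma is applied. Granting those, the chain you give is exactly right: multiplicity-freeness forces any $G$-invariant $Y$ to be a sub-sum in both the $V$- and $W$-decompositions, pairwise orthogonality of non-isomorphic irreducibles under an invariant form identifies $Y^\perp$ with the complementary sub-sums, and any edge crossing the resulting vertex partition would violate $Y\perp Y^\perp$.
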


Note that decompositions in Lemma 4 do not depend on the choice of central extensions. Thus, when proving irreducibility, $G$ and $\tilde{G}$ will not be distinguished. As discussed in \cite{R}, central extension does not affect the irreducibility.

With the setting of Lemma \ref{cross1}, each $W_j$(resp. $V_i$) is connected with some $V_i$(resp. $W_j$) since $W_j$(resp. $V_i$) is not $\{0\}$ as a set. Thus, to prove irreducibility, we just need to prove one side of this bipartite graph is connected. We formulate it in the following corollary.

\begin{corollary}
\label{cross}With the assumptions in Lemma 4, we can conclude irreducibility with one of the following conditions on the graph.
\begin{enumerate}
  \item[a)] for all $i, j \in I$, $V_i$, $V_j$ are connected by some path.
  \item[b)] there exist $i \in I$ such that $V_i$ is connected to $W_j$ for all $j \in J$.
\end{enumerate}
\end{corollary}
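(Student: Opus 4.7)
The plan is to deduce both (a) and (b) directly from \lref{cross1}, by verifying in each case that the full bipartite graph $C(X,G_1,G_2)$ is connected, at which point the lemma immediately gives irreducibility of $X$ under $G$.

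First I would establish the preliminary fact, already noted in the paragraph just above the corollary, that every $W_j$ is adjacent to at least one $V_i$, and symmetrically every $V_i$ is adjacent to at least one $W_j$. For this, fix a nonzero $w\in W_j$; because the Hermitian form $[\cdot,\cdot]$ on $X$ is non-degenerate, there is some $v\in X$ with $[v,w]\neq 0$, and expanding $v=\sum_{i\in I}v_i$ in the decomposition $X=\bigoplus_I V_i$ forces $[v_{i_0},w]\neq 0$ for at least one index $i_0\in I$, giving the desired edge. The symmetric claim follows by swapping the roles of the two decompositions.

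With this in hand, case (a) is immediate: by hypothesis all $V_i$ already lie in a common connected component of $C$, and the preliminary fact places every $W_j$ in that component as well. For case (b), let $i_0$ be the distinguished index with $V_{i_0}$ joined to every $W_j$; any other $V_{i'}$ is joined to some $W_{j'}$ by the preliminary fact, and $W_{j'}$ is joined back to $V_{i_0}$ by hypothesis, so every vertex of $C$ lies within distance two of $V_{i_0}$. In either case $C$ is connected, and \lref{cross1} finishes the proof. I do not anticipate any genuine obstacle: the corollary is essentially a bookkeeping reformulation of the lemma, and the only real content is observing that non-degeneracy of the Hermitian form rules out an isolated $W_j$ or $V_i$ in $C$.
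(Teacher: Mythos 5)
Your proof is correct and follows essentially the same route as the paper: the paper's argument is the sentence immediately preceding the corollary, namely that no $W_j$ (resp.\ $V_i$) can be isolated in the bipartite graph because it is nonzero, so it suffices to verify connectivity on one side. Your use of non-degeneracy of the Hermitian form to exhibit the required nonzero pairing makes the "not isolated" observation a bit more explicit than the paper's terse phrasing, but the idea is the same.
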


\section{Two base cases}

In this section, we discuss two base cases, where the surfaces are sphere with 4 punctures and one-holed torus.
\subsection{Sphere with 4 punctures}
\begin{lemma}$V_{0, 4; a, b, c, d}$ is irreducible under the action of $PB_4$ for all admissible $a, b, c, d$.  By admissible, we mean the vector space associated has positive dimension.
\end{lemma}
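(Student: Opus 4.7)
The plan is to apply \cref{cross} with $G_1 = \langle T_{\gamma_s}\rangle$ and $G_2 = \langle T_{\gamma_t}\rangle$, where $\gamma_s$ and $\gamma_t$ are two of the three essential separating simple closed curves on $\Sigma_{0,4}$. The $s$-channel basis $\{v_m\}$ of $V_{0,4;a,b,c,d}$, indexed by admissible colors $m$ of the single internal edge of the dual trivalent graph (so $(a,b,m)$ and $(c,d,m)$ are admissible triples), diagonalizes $T_{\gamma_s}$ with eigenvalue $(-1)^m A^{m(m+2)}$ by the strand-twist formula from Section~2. The $t$-channel basis $\{w_n\}$ is defined analogously.

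First I would check that the two decompositions are multiplicity free, which is the hypothesis of \cref{cross}. Admissibility forces $m \le \min(a+b,\, 2p-4-a-b) \le p-2$ and the same for any $m'$. Using $A^{2p}=-1$, the eigenvalue equality $(-1)^m A^{m(m+2)} = (-1)^{m'} A^{m'(m'+2)}$ rearranges to $4p \mid (m-m')(m+m'+2-2p)$. Since $m$ and $m'$ share the parity of $a+b$, both factors are even, so writing $m-m' = 2d_0$ and $m+m'+2-2p = 2e_0$ reduces the condition to $p \mid d_0 e_0$. Because $|d_0| \le (p-2)/2 < p$ and $e_0 \in \{-(p-1),\dots,-1\}$ (the lower bound because $m+m'+2 \le 2p-2 < 2p$ forces $e_0 \ne 0$), primality of $p$ forces $d_0 = 0$, i.e. $m = m'$. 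Hence both $T_{\gamma_s}$ and $T_{\gamma_t}$ have simple spectrum in their respective channels.

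The remaining task is to show that the bipartite graph on $\{v_m\} \sqcup \{w_n\}$, with an edge wherever the Hermitian pairing $\langle v_m, w_n\rangle$ is nonzero, is connected. These pairings are the quantum $6j$-symbols of $U_q(\Sl(2))$ on the tetrahedron with edges $(a,b,c,d,m,n)$. I would use option (b) of \cref{cross}: choose the extremal $s$-channel vector $v_{m_0}$ with $m_0 = \max(|a-b|,|c-d|)$, at which the Racah sum collapses to a single term, a ratio of quantum factorials, and argue this is nonzero whenever the four triangle conditions $(a,b,m_0)$, $(c,d,m_0)$, $(a,c,n)$, $(b,d,n)$ hold, which is precisely the condition that $v_{m_0}$ and $w_n$ both exist. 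The main obstacle is exactly this nonvanishing at a prime root of unity: the quantum integers $[k]$ vanish when $p \mid k$, so one must verify that no factorial appearing in the single-term extremal $6j$-symbol picks up such a zero in the denominator (which would push the value to $\infty$ or force a cancellation). For the finitely many colorings where the extremal argument fails, the third separating curve $\gamma_u$ supplies another channel, and the three pairwise bipartite graphs together close any remaining gap; since $\dim V_{0,4;a,b,c,d}$ is small, this is a manageable finite case check.
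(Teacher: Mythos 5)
Your strategy matches the paper's almost exactly: diagonalize with the two Dehn twists $D_{\gamma_1}, D_{\gamma_2}$, observe the decompositions are multiplicity free, use the extremal channel label $m_0 = \max(|a-b|,|c-d|)$ so that the Racah/tetrahedron sum degenerates to a single term, and appeal to Corollary~\ref{cross}. Your multiplicity-freeness verification is in fact more detailed than the paper's (which only asserts that the eigenvalues $(-1)^m A^{m(m+2)}$ are distinct), and that part is correct.

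However, you leave the crux unresolved. You correctly identify that the whole argument hinges on the nonvanishing of the single-term extremal $6j$-symbol at a prime root of unity, but you do not verify it; you instead hedge with ``for the finitely many colorings where the extremal argument fails, the third separating curve $\gamma_u$ supplies another channel\ldots this is a manageable finite case check.'' This is a genuine gap, and the hedge does not patch it: there are infinitely many admissible tuples $(a,b,c,d)$ as $p$ varies, and the dimension of $V_{0,4;a,b,c,d}$ can be close to $p/2$, so this is not a finite check. Moreover, you give no reason the third channel should succeed where the first two do not, so the fallback is a hope rather than a proof. What is actually needed (and what the paper supplies) is a direct bound: in the explicit formula of \cite{BHMV:kauffman1}, with $m_s$, $n_t$ as defined there, one shows $n_s - m_t \le p-2$ for all $s,t$ (since $2(n_s - m_t)$ is a sum of two labels of an admissible triple minus the third, hence $\le 2(p-2)$), and $z \le p-2$ (since $z$ is half the total of an admissible triple), so $[n_s-m_t]!$ and $[z+1]!$ never vanish, and the $[k]!$ in the denominator are likewise nonzero because all colors are $\le p-2$. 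Once that is in place the extremal argument never fails and the fallback is unnecessary. You should replace the hedge with this bound.
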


\begin{proof}
The Dehn twist around $\gamma_1$ and $\gamma_2$ gives 2 othogonal decompositions:
$$V_{0, 4; a, b, c, d}=\bigoplus _{i=\max \{|a-b|,|c-d|\}; i\equiv a-b\mod 2}^{\min \{a+b, c+d, 2p-4-a-b, 2p-4-c-d\}} V_i,$$
$$V_{0, 4; a, b, c, d}=\bigoplus _{j=\max \{|a-d|,|b-c|\}; i\equiv a-d\mod 2}^{\min \{a+d, c+b, 2p-4-a-d, 2p-4-c-b\}} W_j,$$
where $V_i$(resp. $W_j$)are spanned by the single vector $v_i$(resp. $w_j$).
$$
\begin{tpic}[baseline=-.5ex]
\draw (-2, 1.5)node[left]{$a$}..controls (0, 1)..(2, 1.5)node[right]{$d$} (-2,-1.5)node[left]{$b$}..controls(0,-1)..(2, -1.5)node[right]{$c$};
\draw[blue](0, 1.12) to[bend right] (0, -1.12);
\draw[blue, dashed](0, 1.12) node[above]{$\gamma_1$} to[bend left] (0, -1.12);
\draw (-2, -0.5) to[bend right] (-2, 0.5) (2, -0.5) to[bend left](2, 0.5);
\draw[red](-1.83,0) to[bend right](1.83,0);
\draw[red, dashed](-1.83,0) to[bend left](1.83,0) node[right]{$\gamma_2$};
\draw(-2, 1) ellipse (0.25 and 0.5) (2, 1) ellipse (0.25 and 0.5) (-2, -1) ellipse (0.25 and 0.5) (2, -1) ellipse (0.25 and 0.5);
\end{tpic} \hspace{6mm}
v_i=
\begin{tpic}[baseline=-.5ex, line width=1pt]
\draw (-0.707, 0.707) node[left]{$a$}--(0,0)--(-0.707, -0.707)node[left]{$b$};
\draw (0,0)--node[above]{$i$}(1,0);
\draw (1.707, 0.707) node[right]{$d$}--(1,0)--(1.707, -0.707)node[right]{$c$};
\end{tpic}
; w_j=\begin{tpic}[baseline=2ex, line width=1pt]
\draw[rotate=90] (-0.707, 0.707) node[left]{$b$}--(0,0)--(-0.707, -0.707)node[right]{$c$};
\draw[rotate=90] (0,0)--node[right]{$j$}(1,0);
\draw[rotate=90] (1.707, 0.707) node[left]{$a$}--(1,0)--(1.707, -0.707)node[right]{$d$};
\end{tpic}
$$

When $p$ is an odd prime, $(-1)^{i}A^{i(i+2)}$ are different for different $i$. Thus, both decompositions are multiplicitiy free because they have different eigenvalues under the action of $D_{\gamma_t}$($t=1, 2$).

By lemma \ref{cross1}, the irreducibility of the representation of$PB_4$ is equivalent to the connectivity of the graph $C(V_{0, 4; a, b, c, d}, <D_{\gamma_1}>, <D_{\gamma_2}>)$. Before we proceed, we give the change of basis formula \cite{BHMV:kauffman1} for $\{v_i\}$ and $\{w_j\}$ below.
$$v_{i}=\sum_{j}
\left\{\begin{array}{ccc}
a & b & i \\
c & d & j 
\end{array}\right\} w_j=\sum_j\frac{
\begin{tpic}[line width=1pt, baseline=0ex]
\draw (0, 0) circle (1);
\draw (1, 0) node [left] {$j$};
\end{tpic}
\begin{tpic}[line width=1pt, baseline=0ex]
\draw (0, 0) circle (1);
\draw (0, -0.75) node{$j$};
\draw (0.85, 0.85) node{$a$};
\draw (-0.85, 0.85) node{$b$};
\draw (0, 0) to node[right]{$i$} (0, 1);
\draw (0, 0) to node[above]{$d$} (0.866, -0.5);
\draw (0, 0) to node[above]{$c$} (-0.866, -0.5);
\end{tpic}}{
\begin{tpic}[line width=1pt, baseline=0ex]
\draw (0, 0) circle (1);
\draw (-1, 0) to node[above]{$d$} (1, 0);
\draw (0.85, 0.85) node{$a$};
\draw (0.85, -0.85) node{$j$};
\end{tpic}  \begin{tpic}[line width=1pt, baseline=0ex]
\draw (0, 0) circle (1);
\draw (-1, 0) to node[above]{$b$} (1, 0);
\draw (0.85, 0.85) node{$c$};
\draw (0.85, -0.85) node{$j$};
\end{tpic} } w_j$$

where we follow the conventions in \cite{BHMV:kauffman1}:
$$<a, b, c>=
\begin{tpic}[line width=1pt, baseline=-0.5ex]
\draw (0, 0) circle (1);
\draw (-1, 0) to node[above]{$b$} (1, 0);
\draw (0.85, 0.85) node{$a$};
\draw (0.85, -0.85) node{$c$};
\end{tpic}; \hspace{4mm}
\left\langle\begin{array}{ccc}
a & b & i \\
c & d & j 
\end{array}\right\rangle
=\begin{tpic}[line width=1pt, baseline=-0.5ex]
\draw (0, 0) circle (1);
\draw (0, -0.75) node{$j$};
\draw (0.85, 0.85) node{$a$};
\draw (-0.85, 0.85) node{$b$};
\draw (0, 0) to node[right]{$i$} (0, 1);
\draw (0, 0) to node[above]{$d$} (0.866, -0.5);
\draw (0, 0) to node[above]{$c$} (-0.866, -0.5);
\end{tpic}$$
$V_i$ and $W_k$ are connected is equivalent to the span of $v_i$ in $\{w_j\}$ has non-zero coefficient on $w_k$. According to Corollary \ref{cross} case $(a)$, Our strategy is to to find some $i$ such that $V_i$ is connected to $W_j$ for all $j$.

Without loss of generality, we assume $a-b\ge|c-d|$(otherwise, we rotate the symbols), write $v_{a-b}$ as summation of $w_j$:
\begin{equation}
v_{a-b}=\Sigma_{j}
\left\{\begin{array}{ccc}
a & b & a-b \\
c & d & j 
\end{array}\right\} w_j
\end{equation}

If $j$ is admissible, $<b, c, j>$ and $<a, d, j>$ are nonzero, so we just need to check the tetrahedron symbols.
Theorem 2 of \cite{BHMV:kauffman1} gives an explicit formula for the tetrahedron symbol. As in \cite{BHMV:kauffman1}, Let $m_1=(a+b+i)/2$, $m_2=(a+d+j)/2$, $m_3=(b+c+j)/2$, $m_4=(i+d+c)/2$; $n_1=(a+b+c+d)/2$, $n_2=(b+i+d+j)/2$, $n_3=(a+i+c+j)/2$.

\begin{equation}
\left < 
                \begin{array}{ccc}
                 a & b & i \\
                 c& d  & j
                 \end{array} \right > =\sum_{z=\max{m_s}}^{\min{n_t}}\frac{\prod_{s, t}[n_s-m_t]!}{[a]![b]![c]![d]![i]![j]!} \frac{(-1)^z[z+1]!}{\prod_s[n_s-z]!\prod_t[z-m_t]!}.
\end{equation}
In general, the tetrahedron symbol is a summation over $z$ for all $\max_{t}{n_t}\le z\le \min_{s}{m_s}$. In our case, we only have one summand because $\max_{t}{n_t}= z= \min_{s}{m_s}$. which is
\begin{equation}
\left < 
                \begin{array}{ccc}
                 a & b & a-b \\
                 c& d  & j
                 \end{array} \right > =\frac{\prod_{s, t}[n_s-m_t]!}{[a]![b]![c]![d]![a-b]![j]!} \frac{(-1)^z[z+1]!}{\prod_s[n_s-z]!\prod_t[z-m_t]!}.
\end{equation}
Then we need to check the $q$-factorials in the above formula are less than $p$ to make sure all factors are nonzero. Note that $2(n_s-m_t)$ can be realized as summation of two labels of an admissible triple subtracting the other one, which is always less than $2(p-2)$. Thus $n_s-m_t\le p-2$, which implies $[n_s-m_t]!\ne 0$; $z$ is half of the sum of labels of an admissible triple, so $z\le p-2$, $[z+1]!\ne 0$.

The above computation showed that $V_{a-b}$ are connected to $W_j$ for all $j$. By Corollary \ref{cross}(a), $V_{0, 4;a, b, c, d}$ is irreducible under the action of $PB_4$.

\end{proof}

\subsection{One-holed torus}

This case has been studied by G. Patrick and G. Masbaum \cite{GM11}. They proved irreducibiliy of $V_{g, 1; 2a}$ for any $g$ when $p$ is an odd prime. In our case, we just need $g=1$ to start the induction. For completeness, we put a more elementary proof here, and we would like to thank Julien Korinman \cite{Kor} for teaching us the proof.

\begin{lemma}
\label{one}Let $p$ be an odd prime, $1\le a\le \frac{p-3}{2}$, $V_{1, 1; 2a}$ is irreducible under the action of $\M(\Sigma_{1,1})$.
\end{lemma}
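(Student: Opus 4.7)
The plan is to apply Corollary~\ref{cross} with $G_1=\langle D_\alpha\rangle$ and $G_2=\langle D_\beta\rangle$, where $\alpha,\beta\subset\Sigma_{1,1}$ are two essential simple closed curves meeting transversally in a single point. Since $\M(\Sigma_{1,1})\cong SL_2(\Z)$ is generated (projectively) by these two Dehn twists, connectedness of the bipartite graph $C(V_{1,1;2a},\langle D_\alpha\rangle,\langle D_\beta\rangle)$ will force irreducibility.

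First I would set up two bases that diagonalize $D_\alpha$ and $D_\beta$. Let $\Gamma_{1,1}$ be the trivalent graph consisting of one loop and one leg; admissibility of the triple $(i,i,2a)$ at its unique trivalent vertex forces the loop color to satisfy $i\in\{a,a+1,\dots,p-2-a\}$. Taking the handlebody in which $\alpha$ bounds the meridian disk gives the $\alpha$-eigenbasis $\{v_i\}_{i=a}^{p-2-a}$ with $D_\alpha v_i=(-1)^iA^{i(i+2)}v_i$. The dual handlebody (in which $\beta$ bounds the meridian) produces the $\beta$-eigenbasis $\{w_j\}_{j=a}^{p-2-a}$.

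Second, I would verify that both decompositions are multiplicity free. If $(-1)^iA^{i(i+2)}=(-1)^{i'}A^{i'(i'+2)}$, then $i\equiv i'\pmod 2$ and $(i-i')(i+i'+2)\equiv 0\pmod{2p}$. Writing $i-i'=2k$, this becomes $p\mid k(i+i'+2)$, so either $p\mid k$ (which forces $i=i'$, since $|i-i'|<p$) or $p\mid(i+i'+2)$. Since $i+i'+2\in(0,2p)$, the latter means $i+i'+2=p$; but then $i+i'$ is odd while $i\equiv i'\pmod 2$, a contradiction. Hence the eigenvalues are distinct, and the analogous argument applies to $\{w_j\}$.

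Third, I would compute the change-of-basis matrix $v_i=\sum_jM_{ij}w_j$ via a standard handle slide: insert an $\Omega$-colored circle along $\beta$ inside the handlebody and evaluate the resulting closed graph in $S^3$. The outcome expresses $M_{ij}$ as a normalized closed diagram in which an $i$-loop and a $j$-loop are Hopf-linked with the $2a$-strand passing between them, yielding an explicit formula of the same flavor as the tetrahedron formula used in the proof of Lemma~6.

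The main obstacle is proving $M_{ij}\ne 0$ throughout the admissible rectangle $[a,p-2-a]^2$. Following the strategy of Lemma~6, each quantum factorial appearing in the resulting formula has argument bounded by $p-2$ (it arises as half the sum of an admissible triple), so each individual factor is nonzero; the only real risk is cancellation in the sum, which one rules out either by sign analysis or by specialising to diagonals where the sum reduces to a single term. An alternative route is to bypass the diagrammatic computation entirely by embedding a $4$-holed sphere $S'\hookrightarrow\Sigma_{1,1}$ and invoking \tref{inclusion} together with Lemma~6 to transport the connectivity of $C(V_{0,4;\vec{c}},\cdot,\cdot)$ into connectivity of $C(V_{1,1;2a},\langle D_\alpha\rangle,\langle D_\beta\rangle)$.
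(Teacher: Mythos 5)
The frame of your argument (two Dehn-twist eigenbases and Corollary~\ref{cross}) is the right one, and your multiplicity-free check is essentially correct, but the proof has a genuine gap at exactly the point you flag. You need to exhibit at least one index $i$ for which the entries $M_{ij}$ of the change-of-basis matrix are all nonzero (or otherwise establish connectivity of the bipartite graph), and you neither carry out the computation nor give a workable substitute. The collapse-to-a-single-term trick from the four-holed-sphere case does not transfer: there the choice $i=a-b$ forces $\max_s m_s=\min_t n_t$ so the tetrahedron formula has one summand, whereas the change-of-basis coefficient for the one-holed torus is a punctured $S$-matrix, not a tetrahedron symbol, and it remains a genuine alternating sum for every $(i,j)$. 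Asserting that cancellation can be ``ruled out by sign analysis'' is the entire content of the claim, not an afterthought. Your proposed fallback is also unavailable: $\chi(\Sigma_{0,4})=-2<-1=\chi(\Sigma_{1,1})$, so a four-holed sphere cannot embed as a closed subsurface of $\Sigma_{1,1}$ meeting the hypotheses of Theorem~\ref{inclusion} (the complement would have to have Euler characteristic $+1$, hence contain a disk component, which is excluded; the maximal essential planar subsurface of a one-holed torus is a pair of pants).

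The paper sidesteps the diagrammatic computation. With the lollipop basis $\{v_j\}$ (the $D_\gamma$-eigenbasis, $\gamma$ the lollipop loop colored $a+j$), set $w_i=D_\beta^i v_0$ and pair against $v_j$ via the nondegenerate Hopf pairing:
$$\langle w_i, v_j\rangle=\langle v_0, D_\gamma^i v_j\rangle=(-1)^{ij}A^{i(j+a)(j+a+2)}\langle v_0, v_j\rangle.$$
For $p$ an odd prime the scalars $(-1)^jA^{(j+a)(j+a+2)}$ are pairwise distinct, so the pairing matrix is a Vandermonde matrix times $\mathrm{diag}(\langle v_0,v_j\rangle)$; the diagonal factor is invertible because each $\langle v_0,v_j\rangle\neq 0$, so the product is invertible and $\{w_i\}$ spans $V_{1,1;2a}$. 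Consequently $v_0$ has nonzero component in every $D_\beta$-eigenspace, which is precisely condition (b) of Corollary~\ref{cross} applied to the eigenspace $\langle v_0\rangle$. If you replace your ``prove $M_{ij}\neq 0$'' step with this Vandermonde argument, the rest of your outline goes through.
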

\begin{proof}
In \cite{GG}, their computation showed that the Hopf pairing (see figure below) $<,>$ of $V_{1, 1; 2a}$ is a nondegenerate bilinear form.
$$
\begin{tpic}[line width=1pt]
\draw(-1,-0.6)node[left]{$2a$} arc (180:90:1 and 0.6) (-1,-0.6) arc (180:270:1 and 0.6);
\draw(1.4,0)node[right]{} arc (0:290:0.7) (1.4,0) arc (0:-50:0.7);
\draw(1.4, -1.2)node[right]{} arc (0:110:0.7) (1.4, -1.2) arc (0:-230:0.7);
\end{tpic}$$

 Let ${v_j}$ be the lolipop basis shown below, and $w_i=D_{\beta}^i v_0$ be the vector derived from applying Dehn twist along $\beta$ $i$ times to $v_0$.\\
$$
\begin{tpic}[baseline=-.5ex]
\draw(0, 1) node[above]{$\gamma$} arc (90:-90:1);
\draw(-2,0.5) ..controls (-1.1,0.5) and (-0.5,1).. (0, 1);
\draw(-2,-0.5) ..controls (-1.1,-0.5) and (-0.5,-1).. (0, -1);
\draw (-2, 0) ellipse (0.25 and 0.5);
\draw (-0.6, 0) to[bend right] (0.4, 0) (-0.5, -0.05) to[bend left] (0.3, -0.05);
\draw[blue] (-0.1,1) to[bend left] (-0.1,0.07);
\draw[dashed, blue] (-0.1,1) to[bend right] (-0.1,0.07);
\draw[red] (-0.9, 0)node[left]{$\beta$} arc (180:-180:0.8 and 0.5);
\end{tpic} \hspace{15mm}
v_j=
\begin{tpic}[line width=1pt, baseline=-.5ex]
\draw(0,0)--node[above]{$2a$}(1,0);
\draw(2,0)node[right]{$a+j$} arc (0:360:0.5);
\end{tpic}$$
We compute the Hopf pairing of $w_i$ and $v_j$, where $0\le i, j\le p-a-2$:
$$<w_i, v_j>=<D_{\beta}^i v_0, v_j>=<v_0, D_\gamma^i v_j>=(-1)^{ij}A^{i(j+a)(2+j+a)}<v_0, v_j>$$
When $p$ is an odd prime, $(-1)^j A^{(j+a)(2+j+a)}$ are different complex numbers, so matrix $V=\{(-1)^{ij}A^{i(j+a)(2+j+a)}\}_{ij}$ is a Vandermonde matrix, thus invertible. $\{<w_i, v_j>\}_{ij}=V\cdot diag\{<v_0, v_j>\}$. $<v_0, v_j>\ne 0$(c. f. \cite{GG} Page 100), so the product of the two matrices is invertible, which implies ${w_i}$ spans $V_{1, 1; 2a}$.

Consider the bipartite graph $C(V_{1, 1; 2a}, <D_\beta>, <D_\gamma>)$. $<v_0>$ is invariant under the action of $D_\gamma$. The decomposition of $V_{1, 1; 2a}$ under the action of $<D_\gamma>$ is multiplicity free. The argument above showed that $v_0$ have component in all eigenspaces of $D_\beta$ and each eigenspaces are 1-dimensional. That is, $<v_0>$ is connected to all eigenspaces of $<D_\beta>$, and all the eigenspaces are multiplicity free due to eigenvalue test. By corollary \ref{cross}(a), $V_{1, 1; 2a}$ is irreducible.

\end{proof}

\section{The induction}
In this section, we will develope the induction steps. The idea is the following: We can decompose the representation by restricting it to a mapping class group of a subsurface $S'\subset S$. Usually $S-S'$ is a cylinder $\alpha \times I$. The decomposition depends on the choice of $\alpha$ on $S$. Given two different such decompositions, we study the connectivity of the bipartite graph described in Lemma \ref{cross1} to conclude the irreducibility.

The following three lemmas provide us the tools for the induction on genus $g$ and the number of boundaries $b$.

\begin{lemma}
\label{1}
Let $p$ be an odd prime and $g\ge 1$, Suppose $V_{g, 1, c}$ and $V_{g-1, 2; i, j}$ are irreducible as $\M(\Sigma_{g, 1})$ and $\M(\Sigma_{g-1, 2})$ representation respectively for all admissible $c, i, j$, then $V_{g, 2; a, b}$ is an irreducible representation of $\M(\Sigma_{g, 2})$ for all $a, b$.
\end{lemma}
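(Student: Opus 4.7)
The plan is to apply \cref{cross} with two subgroups $G_1, G_2 \subset \M(\Sigma_{g, 2})$ built from two subsurface decompositions of $\Sigma_{g, 2}$ that separately engage the two induction hypotheses.

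For $G_1$, let $\alpha_1$ be a separating simple closed curve on $\Sigma_{g, 2}$ bounding a pair of pants $P$ that contains both boundary components; the complement is a copy of $\Sigma_{g, 1}$. Cutting along $\alpha_1$ gives the orthogonal decomposition
$$V_{g, 2; a, b} = \bigoplus_c V_{g, 1; c} \otimes V_{0, 3; a, b, c} = \bigoplus_c V_{g, 1; c},$$
summed over colors $c$ making $(a, b, c)$ admissible. Set $G_1 \defeq \langle \M(\Sigma_{g, 1}), D_{\alpha_1}\rangle$, with $\M(\Sigma_{g, 1})$ embedded via \tref{inclusion}. The induction hypothesis makes each summand irreducible under $G_1$, and because $p$ is an odd prime the scalars $(-1)^c A^{c(c+2)}$ by which $D_{\alpha_1}$ acts are pairwise distinct, so this decomposition is multiplicity free.

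For $G_2$, choose a separating curve $\alpha_2$ cutting $\Sigma_{g, 2}$ into a $\Sigma_{g-1, 2}$-piece containing $\partial_1$ and a $\Sigma_{1, 2}$-piece containing $\partial_2$, and then a further separating curve $\alpha_3$ in the $\Sigma_{1, 2}$-piece that cuts it into a one-holed torus $\Sigma_{1, 1}$ and a pair of pants with boundary $\alpha_2, \alpha_3, \partial_2$. This produces
$$V_{g, 2; a, b} = \bigoplus_{j, k} V_{g-1, 2; a, j} \otimes V_{1, 1; k},$$
summed over $j$ for which $V_{g-1, 2; a, j} \ne 0$ and $k$ for which $(j, b, k)$ is admissible. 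Set $G_2 \defeq \langle \M(\Sigma_{g-1, 2}), \M(\Sigma_{1, 1}), D_{\alpha_2}, D_{\alpha_3}\rangle$. The induction hypothesis makes $V_{g-1, 2; a, j}$ irreducible under $\M(\Sigma_{g-1, 2})$, and \lref{one} makes $V_{1, 1; k}$ irreducible under $\M(\Sigma_{1, 1})$ (with the closed-torus edge case $k = 0$ following from \cite{R}). Distinct pairs $(j, k)$ are separated by the eigenvalues of $D_{\alpha_2}$ and $D_{\alpha_3}$, so this decomposition is also multiplicity free.

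It remains to check that the bipartite graph $C(V_{g, 2; a, b}, G_1, G_2)$ is connected, and I expect this step to be the main obstacle. By \cref{cross}(b) it suffices to exhibit a single color $c_0$ such that $V_{g, 1; c_0}$ has nonzero Hopf pairing with every summand $V_{g-1, 2; a, j} \otimes V_{1, 1; k}$ of the second decomposition. The plan is to choose $c_0$ at one end of the admissible range, in analogy with the choice $i = a - b$ in the proof of the sphere-with-four-punctures case, so that expressing a distinguished basis vector of $V_{g, 1; c_0}$ in terms of the pants decomposition adapted to $\alpha_2$ and $\alpha_3$ collapses the a priori multi-term tetrahedral expansions into single-term summations. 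Nonvanishing then follows from the explicit formula of \cite{BHMV:kauffman1}: every quantum factorial that appears has argument bounded by $p - 1$ via admissibility, hence is nonzero at a primitive $4p$-th root of unity with $p$ prime. Once connectivity is established, \cref{cross} delivers irreducibility of $V_{g, 2; a, b}$ as an $\M(\Sigma_{g, 2})$-representation.
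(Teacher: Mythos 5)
Your first decomposition (cutting along $\alpha_1$ to get $V_{g,2;a,b}=\bigoplus_c V_{g,1;c}\otimes V_{0,3;a,b,c}$) is exactly the paper's. Your second is not, and this is where the proposal breaks down. The paper's second decomposition comes from a pair of curves $\beta,\gamma$ that are \emph{disjoint} from $\alpha$: cutting along all three refines to a single pants decomposition, so the colored-graph basis vector with $\alpha$ colored $c$, $\beta$ colored $i$, $\gamma$ colored $j$ lies simultaneously in $A_c$ and $B_{i,j}$, and connectivity of the bipartite graph is established by simply writing that vector down (and then observing $(c,\tfrac{p-3}{2},\tfrac{p-3}{2})$ is always admissible). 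No change-of-basis computation whatsoever is needed.

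Your $\alpha_2$ cannot be made disjoint from your $\alpha_1$: $\alpha_1$ bounds a pair of pants containing both boundary components, while $\alpha_2$ separates $\partial_1$ from $\partial_2$, so $\alpha_1$ and $\alpha_2$ necessarily intersect. Consequently the two decompositions do \emph{not} share a refining pants decomposition, there is no common basis vector in $A_c\cap B_{j,k}$, and you are forced to compute the actual transition matrix between two transverse pants decompositions of $\Sigma_{g,2}$. That transition is a composite of several $F$-moves (and $S$-moves if $\alpha_3$ is involved in a genus piece), not a single tetrahedral symbol, and the claim that picking $c_0$ at an extreme of the admissible range will ``collapse the a priori multi-term tetrahedral expansions into single-term summations'' is not justified by analogy with the $V_{0,4}$ case, where there is literally only one tetrahedral symbol in play. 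So there is a genuine gap: the connectivity step, which you correctly flag as the main obstacle, is left as a plan that has no obvious execution. The fix is to replace your $(\alpha_2,\alpha_3)$ with the paper's $(\beta,\gamma)$---a pair of curves disjoint from $\alpha_1$ cutting off a 4-holed sphere containing both marked boundary components, giving $B_{i,j}=V_{g-1,2;i,j}\otimes V_{0,4;i,j,a,b}$---after which connectivity is immediate from the graph basis and \cref{cross}.
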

$$
\begin{tpic}

\draw(-4,-1.5)
    .. controls (-3.5, -1.5) and (-3,-1) .. (-2,-1)
    .. controls (-1,-1) and (-0.5,-1.5) .. (0,-1.5)
..controls (0.5, -1.5) and (1, -1)..(2, -1);

\draw(-4,1.5)
    .. controls (-3.5, 1.5) and (-3,1) .. (-2,1)
    .. controls (-1,1) and (-0.5,1.5) .. (0,1.5)
..controls (0.5, 1.5) and (1, 1)..(2, 1);

\draw(-4, 0.5) to[bend left] (-3, 0) (-4, -0.5) to[bend right] (-3, 0);
\draw(-1, 0) to[bend right] (1, 0);
\draw(-0.9,-0.05) to[bend left](0.9,-0.05);
\draw (-4, 1) ellipse (0.25 and 0.5);
\draw (-4, -1) ellipse (0.25 and 0.5);
\draw (-2, 1) to[bend left] (-2, -1);
\draw[dashed] (-2, 1) node[above]{$\alpha$} to[bend right] (-2, -1);
\draw (0, 1.5) to[bend left] (0, 0.22);
\draw[dashed] (0, 1.5) node[above]{$\beta$} to[bend right] (0, 0.22);
\draw (0, -1.5) to[bend right] (0, -0.29);
\draw[dashed] (0, -1.5) node[below]{$\gamma$} to[bend left] (0, -0.29);
\fill (2.25, 0)circle (0.05);
\fill (2.5,0)circle (0.05);
\fill (2.75, 0)circle (0.05);
\end{tpic}$$
\begin{proof}Let $S$ be a representative of surface $\Sigma_{g, b}$ as above. We restrict $V_{g, 2; a, b}$ to the action of the image of $\M(S-\alpha\times I)\rightarrow \M(S)$ and the image of $\M(S-(\beta\cup\gamma)\times I)\rightarrow\M(S)$ respectively. According to Theorem 3, the kernels factor though the corresponding representations. Notice $\M(S-\alpha\times I) \cong \M(\Sigma_{0, 3})\times \M(\Sigma_{g, 1})$ and $\M(S-(\beta\cup\gamma)\times I)\cong \M(\Sigma_{0, 4})\times\M(\Sigma_{g-1, 2})$. Thus, with the assumptions in this lemma, $V_{g, 2; a, b}$ has the following two irreducible decompositions accordingly.

$$V_{g, 2; a, b}=\bigoplus_c A_c=\bigoplus_c V_{g, 1; c}\otimes V_{0,3; c, a, b}; V_{g, 2; a, b}=\bigoplus_{i, j} B_{i, j}=\bigoplus_{i, j} V_{g-1, 2; i, j}\otimes V_{0, 4; i, j, a, b},$$

where $A_c$ and $B_{i, j}$ are invariant spaces of $D_{\alpha}$ and $D_{\beta} \times D_{\gamma}$ respectively. They are spanned by the graphs shown below.\\
$$
A_c=<
\begin{tpic}[line width=1pt, baseline=-.5ex]
\draw(-1, 0.5) node[left]{a} to (0,0) to (-1, -0.5) node[left]{b};
\draw[line width=1pt, green] (0, 0) to node[above]{$c$} (1, 0);
\draw(1, 0) to[bend left=60](2.5, 0);
\draw(1, 0) to[bend right=60](2.5, 0);
\draw(2.5, 0) to (3.5, 0);
\fill (3.75, 0)circle (0.05);
\fill (4, 0)circle (0.05);
\fill (4.25, 0)circle (0.05);
\end{tpic}
>
;
B_{i, j}=<
\begin{tpic}[line width=1pt, baseline=-.5ex]
\draw(-1, 0.5) node[left]{a} to (0,0) to (-1, -0.5) node[left]{b};
\draw (0, 0) to (1, 0);
\draw[line width=1pt, red](1, 0) to[bend left=60] node[above]{$i$} (2.5, 0);
\draw[line width=1pt, blue](1, 0) to[bend right=60] node[below]{$j$} (2.5, 0);
\draw(2.5, 0) to (3.5, 0);
\fill (3.75, 0)circle (0.05);
\fill (4, 0)circle (0.05);
\fill (4.25, 0)circle (0.05);
\end{tpic}
>
.$$
When $p$ is odd prime, these invariant spaces have different eigenvalues for the Dehn twists, so the decompositions are multiplicity free for all central extensions.

According to Lemma \ref{cross1}, we just need to prove the bipartite graph $C(V_{g, 2; a, b}, \M(S-\alpha\times I), \M(S-(\beta\cup\gamma)\times I))$ is connected.

Note that if $(c, i, j)$ is an admissible triple, $A_c$ and $B_{i, j}$ are connected because the following element will be in the intersection of $A_c$ and $B_{i, j}$.

$$
\begin{tpic}[line width=1pt, baseline=-.5ex]
\draw(-1, 0.5) node[left]{a} to (0,0) to (-1, -0.5) node[left]{b};
\draw[line width=1pt, green] (0, 0) to node[above]{c} (1, 0);
\draw[line width=1pt, red](1, 0) to[bend left=60] node[above]{i} (2.5, 0);
\draw[line width=1pt, blue](1, 0) to[bend right=60] node[below]{j} (2.5, 0);
\draw(2.5, 0) to (3.5, 0);
\fill (3.75, 0)circle (0.05);
\fill (4, 0)circle (0.05);
\fill (4.25, 0)circle (0.05);
\end{tpic}
\in A_c \cap B_{i, j}
$$
Then notice for all $c$, $(c, \frac{p-3}{2}, \frac{p-3}{2})$ will be an admissible triple, so all $A_c$ are connected to $B_{\frac{p-3}{2}, \frac{p-3}{2}}$. $B_{\frac{p-3}{2}, \frac{p-3}{2}}$ is of positive dimension for all $g$. Thus, $V_{g, 2; a, b}$ is an irreducible $\M(\Sigma_{g, 2})$ representation by corollary \ref{cross}.
\end{proof}

If all boundaries are colored by $0$, we should consider it as the closed surface. This case not only shows up in the question itself, but also contribute to the induction. Although closed surface case have been proved by J. Roberts in \cite{R}, we still put this case in our induction for completeness.

\begin{lemma}
\label{2}Let $p$ be an odd prime and $g\ge 2$. Assume $V_{g-1, 2; i, i}$ is irreducible under the action of $\M(\Sigma_{g-1, 2})$ for all $i$. Then $V_{g, 0}$ is irreducible under the action of $\M(\Sigma_{g})$.
\end{lemma}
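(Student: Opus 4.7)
Following the template of \lref{1}, the approach is to expose $V_{g,0}$ to two different subgroups of $\M(\Sigma_g)$ coming from cutting along two non-separating curves, and then to verify connectivity of the resulting bipartite graph via \lref{cross1}. I would choose two non-separating simple closed curves $\alpha,\beta$ on $\Sigma_g$ which are disjoint and whose union is also non-separating; for $g\ge 2$ such a pair exists, and we may moreover arrange them to be two edges $e_\alpha,e_\beta$ of a common trivalent spine $\Gamma$ of $H_g$ that share a trivalent vertex $v$, with third edge $e_\gamma$ at $v$. For $g=2$, $\Gamma$ is just the theta graph. Cutting $\Sigma_g$ along $\alpha$ produces a copy of $\Sigma_{g-1,2}$, and by \tref{inclusion} the inclusion homomorphism $\eta_\alpha\colon\M(\Sigma_{g-1,2})\to\M(\Sigma_g)$ has kernel $\langle T_{\partial_1}T_{\partial_2}^{-1}\rangle$; analogous statements hold for $\beta$.

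Decomposing $V_{g,0}$ by the $D_\alpha$-eigenvalue (equivalently, by the color carried across $\alpha$) gives
$$V_{g,0}=\bigoplus_i V_i^\alpha,\qquad V_i^\alpha\cong V_{g-1,2;i,i},$$
and similarly $V_{g,0}=\bigoplus_j V_j^\beta$ with $V_j^\beta\cong V_{g-1,2;j,j}$. On $V_{g-1,2;i,i}$ both boundary Dehn twists act by the common scalar $(-1)^i A^{i(i+2)}$, so $T_{\partial_1}T_{\partial_2}^{-1}$ acts trivially, and the hypothesis of the lemma yields irreducibility of each $V_i^\alpha$ under $G_\alpha := \Ima(\eta_\alpha)$, and of each $V_j^\beta$ under $G_\beta$. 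Because $p$ is an odd prime, the scalars $(-1)^i A^{i(i+2)}$ are pairwise distinct for $0\le i\le p-2$, so both decompositions are multiplicity-free.

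By \lref{cross1} it then suffices to verify connectivity of the bipartite graph $C(V_{g,0}, G_\alpha, G_\beta)$. An admissible coloring of $\Gamma$ assigning $i,j,k$ to $(e_\alpha,e_\beta,e_\gamma)$ is a basis vector lying in $V_i^\alpha\cap V_j^\beta$, so $V_i^\alpha$ is joined to $V_j^\beta$ in the graph whenever such a coloring exists. For every $i,j\in\{0,\ldots,p-2\}$ the triple $(i,j,|i-j|)$ is admissible at $v$ (correct parity, and $i+j+|i-j|=2\max(i,j)\le 2p-4$), so the problem reduces to extending this local choice to the sub-spine of $\Gamma$ corresponding to the surface $\Sigma_{g-2,3}$ obtained by removing the pair of pants at $v$. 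I would then invoke \cref{cross}(b) by fixing $i_0=0$, forcing $k=j$ at $v$, so that connectivity of $V_0^\alpha$ to every $V_j^\beta$ follows once $V_{g-2,3;0,j,j}\cong V_{g-2,2;j,j}$ is shown non-zero for every admissible $j$.

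The principal obstacle is precisely this last non-vanishing statement. For $g=2$ there is no sub-spine left to extend and the bipartite graph is in fact complete. For $g\ge 3$ one must check by a direct Verlinde-style dimension count (or by a subsidiary admissibility argument on the remaining spine edges) that $V_{g-2,2;j,j}\ne 0$ for every $j$ in the admissible range, and this is the step where one has to be careful. Once this is in hand, \lref{cross1} and \cref{cross}(b) give the conclusion.
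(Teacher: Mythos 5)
Your proposal adopts the same framework as the paper — restrict $V_{g,0}$ to two subgroups isomorphic to $\M(\Sigma_{g-1,2})$ obtained by cutting along non-separating curves $\alpha,\beta$, decompose by the color carried across each curve, check multiplicity-freeness via eigenvalues of the Dehn twists, and then verify connectivity of the bipartite graph using \lref{cross1}. The difference, and the source of your gap, is your choice of $\alpha$ and $\beta$. You take them to be two edges of a trivalent spine sharing a vertex $v$. The paper instead places $\alpha$ and $\beta$ at opposite ends of a chain-like spine. With that placement, for every pair $(i,j)$ the coloring ``$\alpha$-loop $=i$, $\beta$-loop $=j$, every intervening edge and loop $=0$'' is admissible, since the only local triples arising are $(i,i,0)$, $(j,j,0)$, and $(0,0,0)$. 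Hence $A_i\cap B_j\neq 0$ for \emph{all} $i,j$, the bipartite graph is complete, and there is no extension problem to solve at all.

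Your adjacent placement forces a local admissible triple $(i,j,k)$ at $v$ and then an extension of this coloring over the remainder of the spine, which you reduce to the non-vanishing of $V_{g-2,2;j,j}$ for every admissible $j$. You are right to flag this as the obstacle: as written, the proposal has a genuine gap at this step. The non-vanishing statement is in fact true (for instance, route the color $j$ along a path in the spine of $\Sigma_{g-2,2}$ joining the two boundary vertices and color all remaining edges $0$; every internal triple is then $(j,j,0)$ or $(0,0,0)$ and hence admissible), so the gap can be closed, but only at the cost of an auxiliary admissibility argument the paper never needs. The cleanest repair is to move $\alpha$ and $\beta$ apart as the paper does; then the complete-bipartite-graph conclusion is immediate, and the subsidiary dimension count, the case split on $g=2$ versus $g\ge 3$, and the appeal to \cref{cross}(b) all become unnecessary — the raw form of \lref{cross1} suffices.
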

$$
\begin{tpic}
\draw(-1, -1) ..controls (-2, -1) and (-3, -2).. (-4, -2);
\draw (-4, 2) arc (90: 270: 2);
\draw(-1, 1) ..controls (-2, 1) and (-3, 2).. (-4, 2);
\fill (-0.25, 0)circle (0.05);
\fill (0, 0)circle (0.05);
\fill (0.25, 0)circle (0.05);
\draw(1, -1) ..controls (2, -1) and (3, -2).. (4, -2);
\draw (4, -2) arc (-90: 90: 2);
\draw(1, 1) ..controls (2, 1) and (3, 2).. (4, 2);
\draw(-4.5, 0) to[bend right=45] (-2.5, 0);
\draw(-4.4, -0.1) to[bend left=45] (-2.6, -0.1);

\draw(-4.5, 0) to[bend right=45] (-2.5, 0);
\draw(-4.4, -0.1) to[bend left=45] (-2.6, -0.1);

\draw(4.5, 0) to[bend left=45] (2.5, 0);
\draw(4.4, -0.1) to[bend right=45] (2.6, -0.1);

\draw (-3.5, 0.27) to[bend right] (-3.5, 1.92) node[above]{$\alpha$};
\draw[dashed] (-3.5, 0.27) to[bend left] (-3.5, 1.92);

\draw (3.5, 0.27) to[bend right] (3.5, 1.92) node[above]{$\beta$};
\draw[dashed] (3.5, 0.27) to[bend left] (3.5, 1.92);
\end{tpic}$$
\begin{proof}Let $S$ be a representative of $\Sigma_{g, 0}$ as above. We restrict $V_{g, 0}$ to the representation of subgroups of the image of $\M(S-\alpha\times I)$ and the image of $\M(S-\beta\times I)$ in $\M(S)$ respectively. According to Theorem \ref{inclusion}, the kernels factor though the corresponding representations. Notice that both $\M(S-\alpha\times I)$ and $\M(S-\beta\times I)$ are isomorphic to $\M(\Sigma_{g-1, 2})$. Thus, with the assumptions in this lemma, $V_{g, 0}$ has the following two irreducible decompositions accordingly.

$$V_{g, 0}\cong \oplus_i A_i=V_{g-1, 2; i, i}; V_{g, 0}\cong \oplus_j B_j=V_{g-1, 2; j, j}$$

Where $A_i$ and $B_j$ are invariant spaces of $D_{\alpha}$ and $D_{\beta}$ respectively. They are spanned by the graphs shown below:\\
$$
A_i=<
\begin{tpic}[line width=1pt, baseline=-.5ex]
\draw[line width=1pt, red] (-2.25, 0) ellipse (0.75 and 0.5);
\draw (-1.5, 0) to (-0.5, 0);
\fill (-0.25, 0)circle (0.05);
\fill (0, 0)circle (0.05);
\fill (0.25, 0)circle (0.05);
\draw (2.25, 0) ellipse (0.75 and 0.5);
\draw (1.5, 0) to (0.5, 0);
\draw[red] (-2.82, 0)node {$i$};
\end{tpic}
>
;
B_j=<
\begin{tpic}[line width=1pt, baseline=-.5ex]
\draw (-2.25, 0) ellipse (0.75 and 0.5);
\draw (-1.5, 0) to (-0.5, 0);
\fill (-0.25, 0)circle (0.05);
\fill (0, 0)circle (0.05);
\fill (0.25, 0)circle (0.05);
\draw[line width=1pt, blue] (2.25, 0) ellipse (0.75 and 0.5);
\draw (1.5, 0) to (0.5, 0);
\draw[blue] (3.1, 0)node {$j$};
\end{tpic}
>
$$.

When $p$ is odd prime, these invariant spaces have different eigenvalues under the action of $D_\alpha$ and $D_\beta$ respectively, so the decompositions are multiplicity free for all central extension of $\M(\Sigma_{g})$. Notice the following element is in the intersection of $A_i$ and $B_j$:
$$
\begin{tpic}[baseline=-.5ex]
\draw[line width=1pt, red] (-2.25, 0) ellipse (0.75 and 0.5);

\fill (-0.25, 0)circle (0.05);
\fill (0, 0)circle (0.05);
\fill (0.25, 0)circle (0.05);
\draw[line width=1pt, blue] (2.25, 0) ellipse (0.75 and 0.5);
\draw[line width=1pt, red] (-2.82, 0)node {$i$};
\draw[line width=1pt, blue] (3.1, 0)node {$j$};
\end{tpic}
\in A_i\cap B_j.
$$
Argument above showed that the bipartite graph $<V_{g, 0}; \M(S-\alpha\times I), \M(S-\beta\times I)>$ is complete. Thus, the representation is irreducible by Lemma \ref{cross1}.
\end{proof}

Before we introduce the next lemma, we define $\prec$ to be the lexicographical order on pair $(g, b)$.

\begin{lemma}
\label{3}Let $p$ be an odd prime and $(g, b)\notin \{(0, 1), (0, 2), (0, 3), (0, 4), (1, 1), (1, 2)\}$ and $b\ge 0$. Consider vector space $V_{g, b; \vec{c}}$ with $b\ge 1$ and one of the boundary is colored by $a\ne p-2$. If $V_{g', b'; \vec{c'}}$ is irreducible under the action of $\M(\Sigma_{g', b'})$ for all $(g', b')\prec (g, b)$ and any color $\vec{c'}$, $V_{g, b; \vec{c}}$ is irreducible under the action of $\M(\Sigma_{g, b})$.

\end{lemma}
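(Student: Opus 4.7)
The plan is to follow the strategy of \lref{1} and \lref{2}: choose two curves $\alpha$ and $\beta$ on $\Sigma_{g,b}$ whose cuts yield subsurfaces strictly smaller in the lexicographic order, apply \tref{inclusion} and the inductive hypothesis to produce two multiplicity-free irreducible decompositions of $V_{g,b;\vec{c}}$, and then verify the hypothesis of \cref{cross}(b) by exhibiting a summand of the first decomposition that is connected to every summand of the second.

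Let $\partial_1$ denote the distinguished boundary of color $a$. For the first cut, if $b \ge 2$ let $\alpha$ bound a pair of pants with $\partial_1$ and another boundary $\partial_2$ of color $c_2$, so that \tref{inclusion} and the inductive hypothesis give
$$V_{g,b;\vec{c}} = \bigoplus_c V_{g,b-1;\, c,\, c_3,\ldots,c_b},$$
multiplicity-free because $D_\alpha$ acts on the $c$-summand by the distinct eigenvalue $(-1)^c A^{c(c+2)}$. If $b = 1$, instead take $\alpha$ to be a separating curve that splits $\Sigma_{g,1}$ into a $\Sigma_{g-1,2}$ containing $\partial_1$ and a $\Sigma_{1,1}$ placed inside one handle; then \lref{one}, \tref{inclusion}, and the inductive hypothesis give the analogous decomposition $V_{g,1;a} = \bigoplus_c V_{g-1,2;\,a,c}\otimes V_{1,1;c}$. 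Choose $\beta$ similarly: for $b \ge 3$, let $\beta$ bound a pair of pants with $\partial_1$ and $\partial_3$; for $b = 2$ the conclusion already follows from \lref{1}; for $b = 1$, let $\beta$ split off a $\Sigma_{1,1}$ located in a different handle of $\Sigma_{g,1}$.

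The main obstacle is verifying connectedness of the crossing graph from \cref{cross}. For $b \ge 3$, both $\alpha$ and $\beta$ are internal curves of a 4-holed sphere subsurface $S \subset \Sigma_{g,b}$ whose boundary components are $\partial_1,\partial_2,\partial_3$ and an internal curve $\delta$ of $\Sigma_{g,b}$ colored $d$. On the sector of $V_{g,b;\vec{c}}$ obtained by fixing the coloring outside $S$, the change of basis between $\{v_c\}$ and $\{w_{c'}\}$ is exactly the 6j-symbol expansion
$$v_c = \sum_{c'} \left\{\begin{array}{ccc} a & c_2 & c \\ c_3 & d & c' \end{array}\right\} w_{c'}$$
used in the proof of the 4-holed sphere lemma. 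Choosing the color $d$ of $\delta$ so that $a - c_2 \ge |c_3 - d|$ (reordering the boundary labels if necessary), the argument of that lemma shows every 6j-coefficient on the right-hand side is nonzero, so $A_{a-c_2}$ connects to every $B_{c'}$. The hypothesis $a \ne p - 2$ is exactly what keeps $c = a - c_2$ inside the admissible range: when $a = p-2$, admissibility collapses the range of $c$ to a single value and the construction degenerates. For $b = 1$, an analogous 6j-symbol computation inside a 4-holed sphere subsurface containing both $\alpha$ and $\beta$ supplies the same connection.

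Once the connecting vertex is produced, \cref{cross}(b) gives irreducibility of $V_{g,b;\vec{c}}$ under $\M(\Sigma_{g,b})$.
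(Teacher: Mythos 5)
Your overall strategy — produce two multiplicity-free decompositions via inclusion homomorphisms and then use the crossing graph — is the paper's strategy too, but the specific decompositions and the connectivity argument differ substantially, and the connectivity step as written has a real gap.

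\textbf{Different decompositions.} The paper places the $a$-colored boundary $\partial_1$ \emph{between} the two curves, so that $\alpha$, $\beta$ and $\partial_1$ cobound a pair of pants inside the surface. The summands are
$A_i = V_{g_1,b_1+1;\vec{c}_1,i}\otimes V_{g_2,b_2+2;\vec{c}_2,i,a}$ and $B_j = V_{g_1,b_1+2;\vec{c}_1,a,j}\otimes V_{g_2,b_2+1;\vec{c}_2,j}$, and the skein graph with the pair-of-pants vertex colored $(i,j,a)$ lies in $A_i\cap B_j$. Admissibility of $(i,j,a)$ then \emph{directly} gives an edge, with no $6j$-symbol computation. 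You instead put $\partial_1$ on the same side of both cuts, so $\alpha$ and $\beta$ are the two interior curves of a $4$-holed sphere $S$ with one interior boundary $\delta$, and you try to force nonvanishing of $6j$-symbols inside $S$.

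\textbf{The gap.} You fix a single color $d$ for the interior curve $\delta$ and apply the $V_{0,4}$ lemma inside $S$ with labels $(a,c_2,c_3,d)$. With $d$ fixed, the nonvanishing statement only gives you that $A_{a-c_2}$ connects to $B_{c'}$ for $c'$ in the \emph{4-holed-sphere} range
$\max\{|a-c_3|,|c_2-d|\}\le c'\le \min\{a+c_3, c_2+d, 2p-4-a-c_3, 2p-4-c_2-d\}$.
But the admissible range of $c'$ for the full $\beta$-decomposition of $V_{g,b;\vec{c}}$ is larger in general, because the actual constraint coming from the remainder of the surface lets $\delta$ vary over many colors. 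So a single choice of $d$ will typically miss some summands $B_{c'}$, and the hypothesis of Corollary~\ref{cross}(b) is not verified. To repair this you would need to choose a different $d$ for each target $c'$, with $d$ simultaneously admissible on the outside of $S$, $a-c_2$ at the bottom of the $\alpha$-range for that $d$, and $c'$ inside the $\beta$-range for that $d$ — and none of that analysis is in the write-up. (A related soft spot: you also need $d$ to be admissible against the rest of the surface, e.g.\ against whatever sits across $\delta$; this is automatic in the paper's pair-of-pants picture but not in yours.) By contrast, the paper sidesteps $6j$-symbols entirely: it shows that whenever $(i,j,a)$ is admissible, an explicit skein lies in $A_i\cap B_j$, and then proves a combinatorial claim that consecutive $A_i$'s share a common neighbor $B_{j''}$, giving path-connectivity (Corollary~\ref{cross}(a), effectively) rather than a single dominating vertex. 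That path argument is where the hypothesis $a\ne p-2$ actually enters, and it is much more robust than trying to make one $A_i$ hit all $B_j$ directly.

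\textbf{Secondary issues.} Your first decomposition should read $\bigoplus_c V_{0,3;a,c_2,c}\otimes V_{g,b-1;c,c_3,\dots,c_b}$ (the $V_{0,3}$ factor is one-dimensional so this is harmless, but as stated the formula drops a factor). The $b=1$ and $b=2$ branches are only sketched; in particular for $b=1$ you are decomposing by cutting off a $\Sigma_{1,1}$ rather than a pair of pants, so the $4$-holed-sphere/$6j$ machinery does not apply verbatim there and would need its own argument. I'd recommend replacing the $6j$-symbol step with the paper's intersection-of-summands idea: pick $\alpha$, $\beta$ so that they cobound a pair of pants with $\partial_1$; then $A_i\cap B_j\ne 0$ exactly when $(i,j,a)$ is admissible, and prove path-connectivity of the $A_i$'s as in the paper's claim.
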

\begin{tpic}
\draw (-1.8, 0) to (1.8, 0);
\draw (-1.5, 0) to[bend right] (-1.5, 1)node[above]{$\alpha$};
\draw[dashed] (-1.5, 0) to[bend left] (-1.5, 1);
\draw (1.5, 0) to[bend right]  (1.5, 1)node[above]{$\beta$};
\draw[dashed] (1.5, 0) to[bend left] (1.5, 1);
\draw (0, 2) ellipse (0.6 and 0.2);
\draw (1.5, 1) ..controls (0.6, 1) and (0.6, 1).. (0.6, 2) node[right]{$a$};
\draw (-1.5, 1) ..controls (-0.6, 1) and (-0.6, 1).. (-0.6, 2);
\draw (-1.8, 1) to (-1.5, 1) (1.5, 1) to (1.8, 1);

\fill (-2, 0.5)circle (0.04);
\fill (-2.15, 0.5)circle (0.04);
\fill (-2.3, 0.5)circle (0.04);
\fill (2, 0.5)circle (0.04);
\fill (2.15, 0.5)circle (0.04);
\fill (2.3, 0.5)circle (0.04);
\draw (-2.8, 0.5) node {$g_1, b_1$};
\draw (2.8, 0.5) node {$g_2, b_2$};
\end{tpic}
\begin{proof}
With the numerical conditions on $(g, b)$ described in the lemma, we can find $(g_1, b_1)\prec(g, b)$ and $(g_2, b_2)\prec(g, b)$ satisfying $g=g_1+g_2$ and $b=b_1+b_2-1$.
By restricting to the group actions of $\M(S-\alpha\times I)$ and $\M(S-\beta\times I)$, we can decompose the representation $V_{g, b, \vec{c}}=V_{g_1+g_2, b_1+b_2+1; \vec{c_1}, \vec{c_2}, a}$ in following two ways:\\
$$A_{i}=V_{{g_1}, b_1+1; \vec{c_1}, i}\otimes V_{{g_2}, b_2+2; \vec{c_i}, i, a}$$
$$B_{j}=V_{{g_1}, b_1+2; \vec{c_1} a, j}\otimes V_{{g_2}, b_2+1; \vec{c_2}, j}$$

Where $A_i$ and $B_j$ are invariant spaces of $D_{\alpha}$ and $D_{\beta}$ respectively. They are spanned by the graphs shown below:\\
$$
A_i=<
\begin{tpic}[line width=1pt] 
\draw[line width=1pt, red](-0.9, 0) to node[above]{$i$} (0, 0.3);
\draw(0.9, 0) to (0, 0.3);
\draw(0, 0.3) to node[right]{$a$} (0, 1.3);
\fill (-1.45, 0)circle (0.04);
\fill (-1.3, 0)circle (0.04);
\fill (-1.15, 0)circle (0.04);
\fill (1.45, 0)circle (0.04);
\fill (1.3, 0)circle (0.04);
\fill (1.15, 0)circle (0.04);
\end{tpic}
>; B_j=<
\begin{tpic}[line width=1pt]
\draw(-0.9, 0) to (0, 0.3);
\draw[line width=1pt, blue](0.9, 0) to node[above]{$j$} (0, 0.3);
\draw(0, 0.3) to node[right]{$a$} (0, 1.3);
\fill (-1.45, 0)circle (0.04);
\fill (-1.3, 0)circle (0.04);
\fill (-1.15, 0)circle (0.04);
\fill (1.45, 0)circle (0.04);
\fill (1.3, 0)circle (0.04);
\fill (1.15, 0)circle (0.04);
\end{tpic}
>,
$$
where $i$(resp. $j$) run through all colors such that both of their tensor factor have positive dimension. That is, if both $g_1$ and $g_2$ are positive, $i$ and $j$ run through all colors satisfying the parity condition. If $g_1$ or $g_2$ is zero, then we have extra inequality constraints for $i$ and $j$, but these constraints always reduce to an interval.

By the assumption, $V=\oplus_i A_i$(resp.$V=\oplus_j B_j$) is an irreducible and multiplicity free(by checking eigenvalues of the Dehn twist around the circle in the graph) decomposition under the group action of $\M(\Sigma_{g_1, b_1+1})\times \M(\Sigma_{g_2, b_2+2})$(resp. $\M(\Sigma_{g_1, b_1+2})\times \M(\Sigma_{g_2, b_2+1})$), and we noticed that if $(i, j, a)$ is an admissible triple,  $A_i$ and $B_j$ are connected because the following element is in the intersection:
$$\begin{tpic}[line width=1pt]
\draw[line width=1pt, red](-0.9, 0) to node[above]{$i$} (0, 0.3);
\draw[line width=1pt, blue](0.9, 0) to node[above]{$j$} (0, 0.3);
\draw(0, 0.3) to node[right]{$a$} (0, 1.3);
\fill (-1.45, 0)circle (0.04);
\fill (-1.3, 0)circle (0.04);
\fill (-1.15, 0)circle (0.04);
\fill (1.45, 0)circle (0.04);
\fill (1.3, 0)circle (0.04);
\fill (1.15, 0)circle (0.04);
\end{tpic}
\in A_i\cap B_j$$
Now we proves the following claim: Suppose $i\ge i'$ and there exist $j, j'$ such that $(i, j, a)$ and $(i', j', a)$ are admissible triples, then there exist some $j''$ such that $(i, j'', a)$ and $(i-2, j'', a)$ are admissible triples.

If $(i-2, j, a)$ is admissible, we are done. Otherwise, one of the inequality conditions must fail, so we have either $i+j=a$ or $j-i=a$:
\begin{enumerate}
\item[Case 1] $i+j=a$: $i'+j'\ge a$ and $i'< i$ implies $j'>j$, so $B_{j+2}$ is not $\{0\}$. Let $j''=j+2$, we check $(i-2)+j''=a$, $|(i-2)-j''|=|i-4+j|\le|i-4|+j$. Since $i> i'$, $i\ge 2$, so $|i-4|+j\le i+j=a$. Thus $(i-2, j+2, a)$ is an admissible triple.
\item[Case 2] $j-i=a$: $j'-i'\le a$ and $i'< i$ implies $j'<j$, so $B_{j-2}$ is not $\{0\}$. Let $j''=j-2$, we check $|(i-2)-j''|=a$, $(i-2)+j''=j-i+2i-4\ge j-i=a$. Thus $(i-2, j-2, a)$ is an admissible triple.

\end{enumerate}

We take $i'$ to be the least $i$ such that $A_i\ne{0}$. The claim said $B_{j''}$ is connected to $A_i$ and $A_{i-2}$. This gives us paths connect all pairs of adjacent $A_i's$. Together with corollary \ref{cross}(b), we proved $V_{g_1+g_2, b_1+b_2+1; \vec{c_1}, \vec{c_2}, a}$ is irreducible.

\end{proof}

\section{Proof of the theorem 1.1}
In this section, we prove the theorem 1.1.

\begin{proof} We are going to do induction on the lexicographical order $\prec$ of pairs (g, b). We just need to show that for all $V_{g, b;\vec{c}}$ where $(g, b)\notin \{(0, 1), (0, 2), (0, 3), (0, 4), (1, 1), (1, 2)\}$, The irreducibility of $V_{g, b; \vec{c}}$ is implied by some $V_{g', b'; \vec{c'}}$ satisfying $(g', b')\prec (g, b)$.

We first consider the case that one of the boundary is colored by $p-2$. Because summation on colors on the boundary should be an even number, so $b\ge 2$. Pick another boundary that colored by $i$. We have the following isomorphism of $\M(\Sigma_{g, b-1})$ representations:
$$V_{g, b; \vec{c}}=V_{0, 3; p-2, i, p-2-i}\otimes V_{g, b-1; \vec{c}, p-2-i \setminus\{p-2, i\}}$$
By this, we lower the number of boundary components by $1$.

If $b\ge 1$ and none of the boundary is colored by $p-2$, and $(g, b)\notin \{(0, 1), (0, 2), (0, 3), (0, 4), (1, 1), (1, 2)\}$, Lemma \ref{3} implies it is enough to show $V_{g', b';\vec{c'}}$ are irreducible for all $(g', b')\prec(g, b)$.

If $b=0$ and $g\le 2$, Lemma \ref{2} implies it is enough to show $V_{g, 2; i, i}$ is irreducible.

If $(g, b)=(1, 2)$, Lemma \ref{1} implies it is enough to show $V_{1, 1; a}$ is irreducible.

For Base cases:
\begin{itemize}
\item $(g, b)=(0, 1), (0, 2), (0, 3)$. The vector spaces are either 0 or 1-dimensional.
\item $(g, b)=(1, 0)$. This is Weil representation, and it is irreducible.
\item $(g, b)=(0, 4)$ and $(1, 1)$. We proved them in section 4.
\end{itemize}
The above induction proves $V_{g, b; \vec{c}}$ is irreducible for all $g, b$ and all coloring $\vec{c}$.

\end{proof}

\section{Tools for prove denseness}
In the section, we introduce some tools to study images of group
homomorphisms that originate with Goursat's Lemma.

In generic $q$ case, we no longer have the Hermitian form on the skein spaces. Thus, we rephrase lemma \ref{cross1}.
\begin{lemma}
\label{cross2}\cite{Ku}Consider a vector space $X$ over some field $F$ is a (projective) representation of some group $G$, and $G_1$, $G_2$ are two subgroups. Let $X=\bigoplus_I V_i$(resp. $\{X=\bigoplus_J W_{j}\}$) be an irreducible multiplicity free decomposition under the action of some central extension $\tilde{G_1}$(resp. $\tilde{G_2}$) of ${G_1}$(resp. $G_2$). Define a directed graph $C(X, G_1, G_2)$ on the set of irreducible summands, with an edge from $V_j$ to $W_k$ if there exists some element $v\in V_j$ has nonzero component in $W_k$.

If the graph is strongly connected, then $X$ is irreducible as a representation of $G$.
\end{lemma}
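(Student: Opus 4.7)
The plan is to adapt the undirected argument of \lref{cross1} by tracking the direction in which containment of irreducible summands can be propagated. I suppose $Y \subset X$ is a nonzero $G$-invariant subspace and aim to conclude $Y = X$. Since $Y$ is then invariant under both central extensions $\tilde{G_1}$ and $\tilde{G_2}$, and since each $V_i$ is $\tilde{G_1}$-irreducible, $V_i \cap Y$ is either $0$ or all of $V_i$. In the multiplicity-free setting, the $V_i$-isotypic component of $Y$ coincides with $V_i \cap Y$, and semisimplicity forces $Y = \bigoplus_{i \in I'} V_i$ with $I' = \{i : V_i \subset Y\}$. The same argument applied to the second decomposition gives $Y = \bigoplus_{j \in J'} W_j$ for some $J' \subset J$.

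The key step will be a propagation rule that links the two descriptions of $Y$ via the edges of $C(X, G_1, G_2)$. If there is an edge $V_i \to W_k$, meaning some $v \in V_i$ has a nonzero $W_k$-component, and if $V_i \subset Y$, then $v$ lies in $Y = \bigoplus_{j \in J'} W_j$, so its being nonzero in $W_k$ forces $k \in J'$, i.e.\ $W_k \subset Y$. A symmetric argument handles edges $W_k \to V_i$. Since $Y \neq 0$, at least one summand, say $V_{i_0}$, is contained in $Y$. By strong connectivity of $C(X, G_1, G_2)$ there is a directed path from $V_{i_0}$ to every other vertex; iterating the propagation rule along this path yields $V_i \subset Y$ for all $i \in I$ and $W_j \subset Y$ for all $j \in J$. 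Therefore $Y = X$, and $X$ is irreducible as a $G$-representation.

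I expect the only delicate point to be the initial step: verifying that in a multiplicity-free decomposition every invariant subspace splits as $\bigoplus_{i \in I'} V_i$ rather than intersecting the $V_i$ diagonally. This uses Schur's lemma and requires either that $F$ is algebraically closed or that the $V_i$ are absolutely irreducible, which holds in the intended application over $\C$. It is also worth a brief remark that the projective-versus-honest ambiguity causes no trouble: because $\tilde{G_1}$ and $\tilde{G_2}$ are honest representations, because the irreducible decomposition into $V_i$ or $W_j$ is insensitive to the choice of central extension, and because $G$-invariant subspaces of $X$ coincide with $\tilde{G}$-invariant subspaces for any central extension $\tilde{G}$ of $G$, we may work with the central extensions throughout without affecting the conclusion.
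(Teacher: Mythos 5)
Your proof is correct, and it is the standard argument that the cited source \cite{Ku} would use for this lemma. Each step checks out: a nonzero $G$-invariant (hence $\tilde{G_1}$- and $\tilde{G_2}$-invariant) subspace $Y$ must be a direct sum of a subset of the $V_i$ and also of a subset of the $W_j$ (by multiplicity-freeness); the edge condition then propagates containment along directed paths; and strong connectivity carries a single seed summand to all summands, forcing $Y=X$.

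One small point: your caveat about $F$ being algebraically closed (or the $V_i$ being absolutely irreducible) is not actually needed. The relevant fact is that if $X=\bigoplus_i V_i$ with the $V_i$ pairwise non-isomorphic irreducibles over $F$, then every submodule of $X$ is $\bigoplus_{i\in I'} V_i$ for some $I'\subset I$. This follows from the part of Schur's lemma that holds over any field (a nonzero homomorphism between irreducibles is an isomorphism; a homomorphism between non-isomorphic irreducibles is zero), together with semisimplicity of $X$. The ``division ring is a field'' part of Schur's lemma, which does require algebraic closure, is never invoked. So the lemma holds as stated over an arbitrary field.
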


The following theorem is often called non-commutative Chinese remainder theorem, that locally surjectivity implies global surjectivity.

\begin{theorem}Suppose that each of $G_1,G_2,...,G_l$ is a minimal simple Lie group or a non-abelian finite simple group, and suppose that
$$H \subset G = G_1 \times G_2 \times ... \times G_l$$
is a closed subgroup that surjects onto each factor $G_k$. Then
H is a diagonal subgroup of $G$.
\end{theorem}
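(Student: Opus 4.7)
The plan is induction on $l$, with the engine in both the base case and the inductive step being the (closed-subgroup form of) Goursat's lemma. Throughout, the key property of each factor $G_k$ we use is that a closed normal subgroup is either trivial or all of $G_k$: this holds for non-abelian finite simple groups by definition, and for a minimal (i.e.\ centerless connected) simple Lie group because any closed normal subgroup has Lie algebra an ideal of the simple Lie algebra $\mathfrak g_k$, forcing it to be discrete and central, hence trivial.

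For the base case $l=2$, let $H \subset G_1 \times G_2$ surject onto both factors. Set
\[
N_1 = \{g\in G_1 : (g,1)\in H\},\qquad N_2 = \{g\in G_2 : (1,g)\in H\}.
\]
These are closed, and are normal in $G_1$, $G_2$ respectively because surjectivity of the projections lets one conjugate elements of the form $(g,1)$ (resp.\ $(1,g)$) by arbitrary elements of $G_1$ (resp.\ $G_2$). Goursat's lemma then exhibits $H/(N_1\times N_2)$ as the graph of an isomorphism $G_1/N_1 \cong G_2/N_2$. By the simplicity property above, $N_i\in\{1,G_i\}$, and the two sides of the isomorphism must be simultaneously trivial or simultaneously full; so either $H=G_1\times G_2$ or $H$ is the graph of an isomorphism $G_1\cong G_2$. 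Both cases are diagonal.

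For the inductive step, let $\pi:H\to G_1\times\cdots\times G_{l-1}$ be the projection and $H' = \pi(H)$. Then $H'$ surjects onto each $G_k$ for $k<l$, so by the inductive hypothesis $H'$ is a diagonal subgroup: there is a partition of $\{1,\dots,l-1\}$ into blocks $B_1,\dots,B_r$ with fixed isomorphisms identifying the factors inside each block, and $H'$ is the product over blocks of the corresponding diagonal copies. In particular, as an abstract topological group, $H'\cong G_{i_1}\times\cdots\times G_{i_r}$ where $i_j\in B_j$, and its closed normal subgroups are exactly the sub-products of these simple factors. Now apply the $l=2$ argument to $H\subset H'\times G_l$. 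The kernel $\{g\in G_l:(1,g)\in H\}$ is trivial or all of $G_l$. If it is all of $G_l$, then $H = H'\times G_l$, still diagonal. Otherwise $H$ is the graph of a surjection $\varphi:H'\to G_l$, whose kernel $K$ is a closed normal subgroup of $H'$, hence a sub-product of the simple factors $G_{i_1},\dots,G_{i_r}$. Since $H'/K\cong G_l$ is simple, $K$ must omit exactly one block, say $B_j$, and $\varphi$ factors as the projection to that block's diagonal copy followed by an isomorphism $G_{i_j}\cong G_l$. This exactly says that $G_l$ joins $B_j$ into an enlarged block, with the new isomorphism gluing $G_l$ into the diagonal, so $H$ is diagonal on all $l$ factors.

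The main obstacle, and the point where the two hypotheses on the $G_k$ are both needed uniformly, is the inductive step: one must know that closed normal subgroups of the iterated product $H'$ are themselves products of the simple constituents so that the quotient $H'/K$ is again a product of simple groups, forcing $K$ to omit a single block once we know the quotient is the simple group $G_l$. This is immediate from the diagonal structure handed to us by induction together with the simplicity hypothesis on each $G_k$; modulo careful bookkeeping of the isomorphisms identifying the factors within each block, this completes the induction.
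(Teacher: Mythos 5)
The paper states this theorem as folklore (the ``non-commutative Chinese remainder theorem'') and gives no proof, so there is no argument to compare against; your job is simply to supply one, and you have done so correctly. The Goursat induction you use is the standard argument: the base case correctly pins down $N_1,N_2\in\{1,G_i\}$ and observes that the Goursat isomorphism $G_1/N_1\cong G_2/N_2$ forces the two choices to match; the inductive step correctly reduces to the $l=2$ case applied to $H\subset H'\times G_l$, and the key structural fact you invoke --- that closed normal subgroups of a finite product of non-abelian simple factors with trivial center are sub-products --- is true and is exactly what lets you conclude that $K=\ker\varphi$ omits a single block.

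One point you should make explicit: the inductive hypothesis applies to \emph{closed} subgroups, but you apply it to $H'=\pi(H)$, and images of closed subgroups under projections need not be closed in the classical topology of a non-compact Lie group. In the paper's actual use case this is harmless --- $H$ is a Zariski closure in an algebraic group over $\C$, and the image of a morphism of affine algebraic groups is a closed subgroup; likewise the issue evaporates for finite or compact factors. But as written, the statement covers arbitrary ``minimal simple Lie groups,'' and a one-sentence remark (either restricting to the algebraic/compact setting, or replacing $H'$ by its closure and checking that the diagonal structure of $\overline{H'}$ together with density of $H'$ and surjectivity onto each $G_k$ forces $H'=\overline{H'}$) would close this small gap.
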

In our case, group $H$ is the Zariski closure of the mapping class group and $G_i$ are complex simple Lie groups.
Thus, we have a corollary for our case.

\begin{corollary}
\label{nccrt}If $$f: H\rightarrow PSL(W_1)\times PSL(W_2)\times...\times PSL(W_n)$$
is surjective when restrict to $H\rightarrow G_i$ for all $i$, and non of pairs $W_i$ and $W_j$ are isomorphic or dual to each other, then $f$ is surjective.
\end{corollary}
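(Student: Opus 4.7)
The plan is to apply the non-commutative Chinese remainder theorem stated just above. First take $H$ to be the (Zariski-closed) image of $f$ and set $G_k = PSL(W_k)$, each a minimal complex simple Lie group. The hypothesis that $f$ composed with each projection onto $PSL(W_i)$ is surjective is exactly the input that theorem requires, and so it yields that $H$ is a diagonal subgroup of $\prod_k PSL(W_k)$.

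Next I would unpack what \emph{diagonal} means in a product of simple groups: partition the factors into blocks of mutually Lie-isomorphic groups and, within each block, choose an isomorphism so that the block contributes a single diagonally embedded copy. To conclude that $f$ is surjective it then suffices to show that under our hypothesis every block is a singleton. Suppose otherwise and pick distinct indices $i$ and $j$ in the same block. Then there is an isomorphism $\phi : PSL(W_i) \to PSL(W_j)$ such that the projective $H$-representation $W_j$ equals $\phi \circ \rho_i$, where $\rho_i : H \to PSL(W_i)$ is the projective representation $W_i$. Any automorphism of $PSL(n,\C)$ is either inner or an inner automorphism composed with the transpose-inverse outer automorphism, which corresponds at the level of representations to dualization. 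I conclude that either $W_i \cong W_j$ or $W_i \cong W_j^*$ as projective $H$-representations. Either possibility contradicts the hypothesis.

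The main obstacle lies in this last step, and the point to be careful about is that two factors $PSL(W_i)$ and $PSL(W_j)$ can perfectly well be abstractly Lie-isomorphic while carrying inequivalent projective $H$-representations; this is precisely why the hypothesis rules out both isomorphism \emph{and} duality of the $W_i$, corresponding respectively to the trivial and nontrivial classes in $\mathrm{Out}(PSL(n,\C))$. Once this standard classification of outer automorphisms is invoked, the corollary follows immediately from the preceding theorem.
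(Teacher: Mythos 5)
The paper offers essentially no proof of this corollary: it states the non-commutative Chinese remainder theorem and then simply says ``Thus, we have a corollary for our case.'' Your proposal fills in the argument the paper leaves implicit, and it does so correctly. You correctly unpack ``diagonal subgroup'' as: partition the factors into blocks of abstractly isomorphic simple groups, with the closure of the image embedded diagonally within each block via a choice of linking isomorphisms. You then correctly observe that if $PSL(W_i)$ and $PSL(W_j)$ lay in a common block, the linking isomorphism would be inner composed with at most the transpose-inverse outer automorphism (for $n\ge 3$; for $n=2$ the outer automorphism group is trivial but $W\cong W^*$ automatically), which forces $W_i\cong W_j$ or $W_i\cong W_j^*$ as projective $H$-representations and contradicts the hypothesis. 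This is precisely what the hypothesis on non-isomorphism and non-duality was designed to exclude, so the corollary follows.

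One small point of hygiene: the theorem requires a \emph{closed} subgroup, and you pass to the Zariski closure of $f(H)$. Strictly speaking, a dense subgroup of a simple group need not be the whole group, so concluding $f(H)=G$ from $\overline{f(H)}=G$ needs a word. In the paper's actual use, $H$ is itself a Zariski-closed algebraic group and $f$ is a morphism of algebraic groups, so the image is automatically closed and the issue evaporates. This minor gap is inherited from the informal statement of the corollary and is not a defect of your argument.
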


Lastly, we need the following surjectivity theorem:

\begin{theorem}
\label{surj}
Let $V$ be a finite-dimensional complex representation of a connected Lie group $G$, Let $H\subset G$ be a closed, connected subgroup, and let 
$$V|_H\cong \bigoplus_{k=1}^{n}W_k$$
be the decomposition of the restricted representation. Suppose that:
\begin{enumerate}
\item $V$ is $G$-irreducible.
\item At most one of $W_i$ is of dimension 2 and at most one of $W_i$ is of dimension 1.
\item For every $j\ne k$, the summands $W_j$ and $W_k$ are neither isomorphic nor dual as projective representation of $H$.
\item For each j, $H$ surjects onto $PSL(W_j)$.
\end{enumerate}

Then $G$ surjects onto $PSL(V)$.

\end{theorem}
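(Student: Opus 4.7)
The plan is to identify the Lie algebra $\mathfrak{g}' \subset \mathfrak{sl}(V) = \mathfrak{pgl}(V)$ of the image of $G$ in $PGL(V)$ with all of $\mathfrak{sl}(V)$; once this is established, connectedness of $G$ yields the required surjection onto $PSL(V)$. The starting point is \cref{nccrt} applied to the projective $H$-representation $V = \bigoplus_k W_k$: by conditions (3) and (4), $H$ surjects onto $\prod_k PSL(W_k)$, so the block-diagonal subalgebra $\mathfrak{h}' := \bigoplus_k \mathfrak{sl}(W_k)$ lies inside $\mathfrak{g}'$. Condition (1) makes $V$ irreducible under $\mathfrak{g}'$, and combined with $\mathfrak{g}' \subset \mathfrak{sl}(V)$ this forces the solvable radical of $\mathfrak{g}'$ to vanish (by Lie's theorem it would act by scalars on $V$, but nonzero scalars have nonzero trace); hence $\mathfrak{g}'$ is semisimple and its trace form $B(X,Y) := \Tr_V(XY)$ is nondegenerate.

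Next, decompose $\mathfrak{sl}(V)$ as an $\mathfrak{h}'$-module:
$$\mathfrak{sl}(V) \;=\; \mathfrak{h}' \;\oplus\; T \;\oplus\; \bigoplus_{j \ne k} \Hom(W_j, W_k),$$
where $T \cong \{\alpha \in \C^n : \sum_k \alpha_k \dim W_k = 0\}$ is the $(n-1)$-dimensional trivial piece. Each $\Hom(W_j, W_k)$ is $\mathfrak{h}'$-irreducible, and under condition (2) different ordered pairs $(j,k)$ give non-isomorphic $\mathfrak{h}'$-modules, with the sole possible exception of $\Hom(W_j, W_k) \cong \Hom(W_k, W_j)$ in the single pair (if it occurs) where one $W_i$ has dimension $1$ and another has dimension $2$. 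Away from that pair, $\mathfrak{h}'$-invariance forces $\mathfrak{g}' = \mathfrak{h}' \oplus T' \oplus \bigoplus_{(j,k) \in S} \Hom(W_j, W_k)$ for some $T' \subset T$ and some $S \subset \{(j,k) : j \ne k\}$.

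Three properties now pin down $S$ and $T'$. \emph{Transitivity}: for distinct $j, k, \ell$, $[\Hom(W_j, W_k), \Hom(W_k, W_\ell)] = \Hom(W_j, W_\ell)$. \emph{Symmetry}: the trace form pairs $\Hom(W_j, W_k)$ nondegenerately with $\Hom(W_k, W_j)$ and annihilates the other cross terms, so asymmetry of $S$ would introduce a nonzero radical into $B|_{\mathfrak{g}'}$, contradicting semisimplicity. \emph{Connectivity}: $\mathfrak{g}'$-irreducibility of $V$ forbids $S$ from respecting any nontrivial partition of indices. Together these give $S = \{(j,k) : j \ne k\}$. For $T'$, observe that $[A, B]$ with $A \in \Hom(W_j, W_k)$, $B \in \Hom(W_k, W_j)$ has scalar projection $\Tr(AB) \bigl(\tfrac{1}{d_k} e_k - \tfrac{1}{d_j} e_j\bigr)$, and these vectors span $T$ as $(j, k)$ varies. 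Hence $\mathfrak{g}' = \mathfrak{sl}(V)$.

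The main obstacle is the exceptional isotypic pair above, present precisely when both a one- and a two-dimensional summand occur. There the combined isotypic is four-dimensional, and $\mathfrak{g}'$ could a priori meet it only in a two-dimensional ``diagonal'' submodule, where the trace-form symmetry argument breaks. If $n \ge 3$ this is absorbed by routing through any third index $\ell$: the generic analysis still gives $(1, \ell), (\ell, 2) \in S$, so $\Hom(W_1, W_2) = [\Hom(W_1, W_\ell), \Hom(W_\ell, W_2)] \subset \mathfrak{g}'$ (and symmetrically for the reverse). The residual case is $n = 2$ with $\dim V = 3$: here $\mathfrak{g}'$ is a connected semisimple subalgebra of $\mathfrak{sl}_3$ containing the block copy of $\mathfrak{sl}_2$ and acting irreducibly on $\C^3$, and the classification of semisimple subalgebras of $\mathfrak{sl}_3$ forces $\mathfrak{g}' = \mathfrak{sl}_3$.
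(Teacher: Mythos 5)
Your proof is correct, and its overall strategy matches the paper's: both reduce to showing the Lie algebra $\mathfrak{g}'$ of the image contains every off-diagonal block $\Hom(W_j,W_k)$, decompose $\mathfrak{sl}(V)$ as an $\mathfrak{h}'$-module, use transitivity (commutators of blocks) plus a form of connectivity (irreducibility), and separately dispose of the low-dimensional exception. The genuine difference is in how completeness of the graph is obtained. You first prove $\mathfrak{g}'$ is semisimple (trivial radical from Schur and tracelessness) and then use nondegeneracy of the trace form $B|_{\mathfrak{g}'}$ to get \emph{symmetry} of the edge relation, after which ``union of cliques $+$ connected $\Rightarrow$ complete.'' The paper instead works with a \emph{directed} graph and derives strong connectivity directly from irreducibility (a sink strongly-connected component would give an invariant subspace), which together with transitive closure already forces completeness; this bypasses the semisimplicity and trace-form steps entirely. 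Your treatment of the exceptional isotypic (a $1$-dim and a $2$-dim summand coexisting) routes $\Hom(W_1,W_2)$ through a third index, whereas the paper passes to $V'=\bigoplus_{k\ge2}W_k$, argues $\mathfrak{sl}(V')\subset\mathfrak{g}'$, and then uses the multiplicity-free $\mathfrak{sl}(V')$-decomposition of $\mathfrak{sl}(V)$. Both are fine; your use of the trace form buys a cleaner justification of symmetry at the cost of extra structural input.

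One remark on a step you wave at: you assert that for $n\ge 3$ ``the generic analysis still gives $(1,\ell),(\ell,2)\in S$,'' while also saying the trace-form argument ``breaks'' on the exceptional isotypic. In fact the trace form does not break there — it becomes decisive. On $\Hom(W_1,W_2)\oplus\Hom(W_2,W_1)\cong W_2\otimes\C^2$ the trace form restricts to $\omega_{W_2}$ tensored with a symplectic form on the multiplicity space $\C^2$, so every $2$-dimensional $\mathfrak{h}'$-submodule is totally $B$-isotropic; nondegeneracy of $B|_{\mathfrak{g}'}$ therefore forces $\mathfrak{g}'$ to meet this isotypic in either $0$ or all of it. This is precisely what is needed to make your connectivity step airtight in the case $1$ and $2$ fall into different components: a diagonal submodule would spoil invariance of $\bigoplus_{k\in C}W_k$ without contributing an edge, and it is the isotropy observation, not the routing per se, that rules it out. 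With that added, your argument is complete.
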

\begin{proof}
Let $L$ be the Lie algebra of $G$, $K\subset L$ be the corresponding Lie algebra of $H$. Then we have the splitting
$$\gl(V)=\Sl(V)\oplus \C$$.

The non-commutative Chinese remainder theorem for Lie algebra gives us that $H$ is jointly surjective:
$$H\twoheadrightarrow \bigoplus_k \Sl(W_k)$$.

Meanwhile we have the partial decomposition
$$\Sl(V)=\bigoplus_{j\ne k}W_j\otimes W_{k}^{*}\oplus\bigoplus_{k}\Sl(W_k)\oplus \C ^{n-1} $$.

Notice that if for some $i$, $\dim W_i > 2$, then $W_i\ncong W_i^*$ as $\Sl(W_i)$ representation. Together with the 3rd assumption in the theorem, $W_i\otimes W_j^*$ and $W_j\otimes W_i^*$ are both unique in the decomposition in the sense that they are not isomorphic to each other and all other summands as a representation of $H$. That is, The off-diagonal blocks are multiplicity free if and only if at most one of $W_i$ satisfying $\dim W_i\le 2$. In the rest of the proof, we discuss it case by case. Without lose of generality, we assume the $\dim W_i\le \dim W_j$ if $i\le j$.
\begin{enumerate}
    \item [Case 1] $\dim W_2\ge 3$.
    
    As we discussed above, the off diagonal summands are multiplicity free in this case. so the image of $L$ in $\Sl(V)$ contains a subset of the off-diagonal blocks $W_j\otimes W_k^*$ and some subspace of $\C^{n-1}$.
    We can make a directed graph $\Gamma$ with a directed edge $k\rightarrow j$ for every off-diagonal block $V_j\otimes V_k^*$ which is in the image of $L$. We claim $\Gamma$ is complete.
    
    First, we prove $\Gamma$ is strongly connected. Assuming otherwise, $\Gamma$ would have a strongly connected component $C$ with no outward edges. In this case $\bigoplus_{k\in C} W_k$ would be a non-trivial subrepresentation of $L$. $G$ is connected, so $\bigoplus_{k\in C} W_k$ is also a subrepresentation of $G$, contradicting the hypothesis that $V$ is irreducible.
    
    Second, we prove that $\Gamma$ is transitively closed. Suppose that $i\rightarrow k\rightarrow j$ is a path of length two using three distinct vertices. Choose two operators
    $$Y\in \Hom (W_l, W_k)\cong W_k\otimes W_l^*$$
    $$X\in \Hom (W_k, W_j)\cong W_k\otimes W_l^*$$
    whose product
    $$XY\in \Hom(W_l, W_j)\cong W_j\otimes W_l^*$$
    is non-zero. Then $[X, Y]$ is a non-zero element in $W_j\otimes W_l^*$. The image of $L$ thus contains some elements of $W_j\otimes W_l^*$, and therefore it contains all of them and $\Gamma$ is transitively closed.
    
    If $\Gamma$ is both strongly connected and transitively close, then it is the complete directed graph. In the final step, choose some basis of $V$ that refines the decomposition of $V|_H$. In this basis, the image of $L$ contains all off-diagonal elementary matrices, so there commutators gives us all of $\Sl(V)$
    
    \item [Case 2] $n\ge 3$ and $\dim W_2=2$, $\dim W_1=1$.
    
    Define $V'=\bigoplus_{k=2}^{n}W_k$. The identical argument as in case 1 shows that $\Sl(V')$ is in the image of $L$. Thus we have decomposition
    $$\Sl(V)=\Sl(V')\oplus V' \oplus V'^*\oplus \C$$
    as representation of $\Sl(V')$. The decomposition is multiplicity free since $\dim V' \ge 3$. $V$ is irreducible as $L$ representation, so The image of $L$ contains both $V$ and $V^*$. The commutators will give us all of $\Sl(V)$.
    
    \item [Case 3] $n=2$ and $\dim W_2=2$, $\dim W_1=1$.
    
    In this case, the $V$ is a $3$-dimensional representation. we list all the Lie subalgebra of $\Sl(V)$. Only $\Sl(V)$ itself makes $V$ irreducible.
    
\end{enumerate}
\end{proof}
Before we get to the last section, we introduce a lemma conjectured by the first author and supported by David Speyer. It is useful for analyzing weight diagrams.

\begin{lemma}[Speyer] Let $D(\lambda)$ be the set of weights in the weight diagram of the irreducible representation $V(\lambda)$ of a simple Lie algebra $\g$. Let $S(\lambda)=\lambda-D(\lambda)$ as a set of vectors in $\h^*$. The indecomposable elements of $S(\lambda)$ are roots of $\g$.
\end{lemma}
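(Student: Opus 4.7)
We prove the contrapositive: every nonzero $v \in S(\lambda)$ that is not a root of $\g$ admits a decomposition $v = v_1 + v_2$ with $v_1, v_2 \in S(\lambda) \setminus \{0\}$. Since every weight of $V(\lambda)$ lies weakly below $\lambda$ in the dominance order, $S(\lambda)$ is contained in the non-negative integer cone $Q_+$ spanned by the simple roots, so $v$ has a well-defined support $I(v) \subseteq \{1, \dots, \mathrm{rank}(\g)\}$.

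The argument proceeds by induction on the rank of $\g$, after two reductions. First, if $v = n\alpha$ for some $\alpha \in \Phi^+$ and integer $n \geq 2$, the $\Sl(2)_\alpha$-string through $\lambda$ in $V(\lambda)$ is $\lambda, \lambda - \alpha, \dots, \lambda - \langle\lambda, \alpha^\vee\rangle\alpha$; since $\lambda - n\alpha \in D(\lambda)$ forces $n \leq \langle\lambda, \alpha^\vee\rangle$, both $\alpha$ and $(n-1)\alpha$ lie in $S(\lambda)$, giving $v = \alpha + (n-1)\alpha$. Second, if the support $I = I(v)$ is a proper subset of the simple roots, one observes that any expression $\mu := \lambda - v = f_{i_1}\cdots f_{i_k} v_\lambda$ must use only indices $i_j \in I$, since the net weight change lies in the cone spanned by $\{\alpha_i : i \in I\}$. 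Hence $\mu$ is a weight of the $\g_I$-subrepresentation $V_I \subseteq V(\lambda)$ generated by $v_\lambda$, and the subset of $S(\lambda)$ with support in $I$ coincides with the $S$-set of the irreducible $\g_I$-module $V_I$. By induction on rank---the lemma extending trivially to products of simple Lie algebras, since an indecomposable element of a direct sum $S_1 \oplus S_2$ must lie entirely in one factor---such indecomposable elements are roots of $\g_I \subseteq \g$.

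The remaining case has $v$ with full support and $v$ not a scalar multiple of any root. Set $\mu = \lambda - v$. The strategy is to locate a positive root $\beta$ satisfying $\beta \leq v$, $\beta \neq v$, with both $\lambda - \beta, \mu + \beta \in D(\lambda)$, which yields the decomposition $v = \beta + (v - \beta)$. By crystal theory the set $J = \{j : \mu + \alpha_j \in D(\lambda)\}$ is nonempty, and if some $j \in J$ satisfies $\langle\lambda, \alpha_j^\vee\rangle \geq 1$, then $\beta = \alpha_j$ works. \emph{The main obstacle} is the case when $\langle\lambda, \alpha_j^\vee\rangle = 0$ for every $j \in J$: the Weyl subgroup $W_J = \langle s_j : j \in J\rangle$ then fixes $\lambda$ and moves $\mu$ within $D(\lambda)$, so $W_J \cdot v \subseteq S(\lambda)$. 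Combining a careful analysis of this orbit with the saturation of $D(\lambda)$ under $\Sl(2)$-strings and the polytope description $D(\lambda) = (\lambda + Q) \cap \mathrm{conv}(W\lambda)$, one produces a suitable positive root $\beta$ (possibly non-simple) under the hypothesis that $v$ is not a root, completing the decomposition. This last step is the most delicate and requires detailed combinatorial analysis of the root system geometry.
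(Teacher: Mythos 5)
Your proof takes a fundamentally different route from the paper's and, by your own admission, leaves its hardest step unfinished. The two reductions you carry out --- handling scalar multiples $n\alpha$ of a root via the $\Sl(2)_\alpha$-string through $\lambda$, and handling proper support via the Levi subalgebra $\g_I$ --- are sound, as is your observation that the inductive hypothesis extends painlessly to semisimple $\g_I$. But in the remaining case (full support, $v$ not a multiple of a root, and $\langle\lambda,\alpha_j^\vee\rangle=0$ for every $j$ with $\mu+\alpha_j\in D(\lambda)$) you only describe a strategy: ``one produces a suitable positive root $\beta$\dots requires detailed combinatorial analysis.'' That is exactly where the content of the lemma lives, and it is asserted rather than proved, so there is a genuine gap.

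The paper's own proof avoids this case analysis entirely with a short PBW argument. Set $C=S(\lambda)\cap R$ and $C'=R\setminus C$, and choose an ordering of the positive roots in the PBW theorem so that the root vectors $f_{c'}$ with $c'\in C'$ act on $v_\lambda$ first. Each such $f_{c'}$ annihilates $v_\lambda$, since $\lambda-c'\notin D(\lambda)$; hence $V(\lambda)=U(\mathfrak{n}_-)v_\lambda$ is already spanned by monomials $f_{c_1}^{a_1}\cdots f_{c_k}^{a_k}\,v_\lambda$ with every $c_i\in C$. For any $\lambda'\in D(\lambda)$ one therefore has $\lambda-\lambda'=\sum_j a_j c_{i_j}$ with all $c_{i_j}\in C\subseteq S(\lambda)$, and indecomposability forces the sum to consist of a single term, so $\lambda-\lambda'$ is a root. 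I would suggest replacing your combinatorial induction with this argument; it renders the open case moot.
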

\begin{proof}
Let $R$ be the set of positive roots of $\g$. $C=S(\lambda)\cap R$, $C'=R-C$ and $v_{\lambda}$ be a vector in $V(\lambda)$ have weight $\lambda$. Notice $U(\mathfrak{n}_-)(v_\lambda)= V(\lambda)$. By PBW theorem, $U(\mathfrak{n}_-)$ is spanned by monomials of form $f_{c_1} f_{c_2}...f_{c_m} f_{c'_1} f_{c'_2}...f_{c'_n}v_{\lambda}$ where $c_i\in C$ and $c'_j\in C'$. Since $\lambda -c'_j \notin D(\lambda)$, we have $f_{c'_j}v_\lambda=0$ if $c'_j\ne 0$, so $V(\lambda)$ is spanned by $f_{c_1} f_{c_2}...f_{c_m} v_\lambda$. That is, for all weight $\lambda'\in D(\lambda)$ such that $\lambda-\lambda'$ indecomposable in $S(\lambda)$. $\lambda'$ is either $\lambda$ or $v_{\lambda'}=f_{c_i} v_{\lambda}$. This proves $\lambda-\lambda_i=c_i$ is a positive root.
\end{proof}

\section{Proof of the theorem 1.2}
The proof is by induction on number of boundaries. The induction begins at $b=4$.
\begin{lemma}
\label{begin}Let $A$ be a transcendental complex number. $\widetilde{PB_4} $ is dense in the algebraic group $SL(V_{0, 4; a, b, c, d})$ if $\dim{V_{0, 4; a, b, c, d}}\ge 2$
\end{lemma}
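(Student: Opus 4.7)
The plan is to show that the Zariski closure $G$ of the $\widetilde{PB_4}$-image in $SL(V)$ equals $SL(V)$, where $V=V_{0,4;a,b,c,d}$. The argument follows the template of \cite{Ku}: establish irreducibility, show the identity component $G^0$ is a connected reductive subgroup acting irreducibly, then eliminate every proper possibility.

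First I would establish irreducibility of $V$ as a $\widetilde{PB_4}$-module by running the argument of the earlier irreducibility lemma for $V_{0,4;a,b,c,d}$ (at odd prime level) in the transcendental $A$ setting. Transcendence of $A$ guarantees that the Dehn twist eigenvalues $(-1)^i A^{i(i+2)}$ are pairwise distinct and that no quantum integer vanishes, so the tetrahedron-symbol calculations that connected $V_{a-b}$ to every $W_j$ still yield non-zero coefficients. Applying Lemma \ref{cross2} (the directed version, appropriate here since there is no Hermitian form) then delivers irreducibility.

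Next I would analyze $G$. Irreducibility forces the unipotent radical of $G^0$ to act trivially, so $G^0$ is reductive; and a standard argument using that $V|_{G^0}$ is isotypic and that the regular semisimple elements $D_{\gamma_t}$ lie in a unique maximal torus (hence in $G^0$) shows $V|_{G^0}$ is irreducible. Thus $G^0$ is a connected reductive subgroup of $SL(V)$ acting irreducibly, containing the Dehn twists $D_{\gamma_1}$ and $D_{\gamma_2}$, whose eigenvalues are made as multiplicatively free as possible by transcendence of $A$.

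The main step is to rule out every proper connected reductive irreducible subgroup of $SL(V)$. For $\dim V=2$ this is immediate: two distinct $1$-parameter subgroups generate all of $SL(2)$, and transcendence ensures the closures of $\langle D_{\gamma_1}\rangle$ and $\langle D_{\gamma_2}\rangle$ are distinct. For larger $\dim V$, I would eliminate $SO(V)$ and $Sp(V)$ by showing that $V$ admits no $\widetilde{PB_4}$-invariant bilinear form $B$: invariance under $D_{\gamma_1}$ forces $B(v_i,v_j)=0$ unless $\lambda_i\lambda_j=1$, and by transcendence this happens only when $i=j=0$; combining this with the parallel constraint for $D_{\gamma_2}$ and the non-trivial 6j change-of-basis relating $\{v_i\}$ to $\{w_j\}$ rules out any non-zero $B$ for generic colorings. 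Tensor-decomposable and exceptional possibilities for $G^0$ force additional multiplicative relations on the $\lambda_i$ that transcendence again prohibits, and Speyer's Lemma organizes the root-theoretic side of this step by pinning down which weight differences of $V|_T$ must be roots of $G^0$.

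The hard part will be this classification. Unlike the inductive $b\ge 5$ cases handled by Theorem \ref{surj}, the base $b=4$ has no smaller subsurface to exploit, so all obstructions to proper reductive subgroups must be read off directly from the Dehn twist eigenvalue data and the 6j symbols. I expect the most delicate sub-case to be colorings with extra symmetry (e.g.\ $a=b=c=d$), where partial reciprocities $\lambda_i \lambda_j = 1$ become possible for $i=j=0$ and a direct check against the 6j change-of-basis matrix is needed to exclude an invariant pairing.
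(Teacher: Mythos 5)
Your proof splits into the same three parts as the paper (irreducibility; positive-dimensional, reductive, irreducible identity component; rule out all proper possibilities), and the first two parts align with the paper: the tetrahedron-symbol/6j argument for irreducibility is the same, and your argument that $G^0$ is reductive with $V|_{G^0}$ irreducible (via isotypic decomposition and the distinct eigenvalues of the regular Dehn twists) is a reasonable supplement to what the paper states more tersely.

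Where you genuinely diverge from the paper is in the last, decisive step. You propose a classification-by-elimination: show $V$ carries no invariant bilinear form to exclude $SO$ and $Sp$, then separately exclude tensor-decomposable and exceptional subgroups via multiplicative relations among the Dehn-twist eigenvalues. That is a standard template, but as you acknowledge yourself, the elimination is the hard part and is left incomplete; in particular you would need a usable description of all irreducible reductive subgroups of $SL(N)$ for arbitrary $N$, and the ``no invariant form'' step needs to handle the projective normalization correctly (invariance forces $\lambda_i\lambda_j$ constant, not $\lambda_i\lambda_j=1$, so the supported entry need not be $(0,0)$). The paper instead makes a \emph{positive} identification of $\g$, and Speyer's Lemma does the real work rather than playing the supporting role you assign it. Concretely: a generator of the torus closure of $D_{\gamma_1}$ acts with eigenvalues $\lambda_k = C + (i_0+2k)^2$; the differences $\lambda_0-\lambda_1$, and then all $\lambda_0-\lambda_k$, are shown to be indecomposable in the weight-difference monoid, hence are roots of $\g$ by Speyer's Lemma; the corresponding Weyl reflections act as the transpositions $(0\,k)$ on the weight multiset, so the Weyl group contains $S_{n+1}$; combined with $V$ being an $(n+1)$-dimensional faithful irreducible representation, this forces $\g=\Sl_{n+1}$. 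This avoids the subgroup classification entirely and handles all colorings (including the symmetric ones you flag as delicate) uniformly, since the eigenvalue spacing $(i_0+2k)^2$ already has no accidental coincidences once $A$ is transcendental. If you pursue your elimination route you should expect substantially more casework than the paper's argument, and you should not expect the bilinear-form step alone to be decisive.
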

\begin{proof}

\begin{itemize}
\item The representation is irreducible. 

We adapt the notations of section 4.1 and claim the graph $C(V_{0, 4; a, b, c, d},<D_{\gamma_1}>,<D_{\gamma_2}>)$ is complete.

We still have the change of basis formula (1) in section 4.1. To prove the $v_i$ have a non-zero component in $W_j$, need to show coefficient
\begin{equation}
\left < 
                \begin{array}{ccc}
                 a & b & i \\
                 c& d  & j
                 \end{array} \right > =\sum_{z=\max{m_s}}^{\min{n_t}}\frac{\prod_{s, t}[n_s-m_t]!}{[a]![b]![c]![d]![i]![j]!} \frac{(-1)^z[z+1]!}{\prod_s[n_s-z]!\prod_t[z-m_t]!}\ne 0.
\end{equation}
Notice that when consider it as a rational function of $A$, the degree is different among its summands. Thus, the leading term only appeared in one of the summands. Since $A$ transcendental, it is not a root of any rational function,
so the coefficient is not zero. We proved the graph is complete and the representation is irreducible.

\item The image is infinite.

Consider two consecutive colors $v_i$ and $v_{i+2}$. Let $\lambda_i$ and $\lambda_{i+1}$ be the eigenvalues of $D_{\gamma_1}$ respectively. $\lambda_{i+1}/\lambda_{i}=-A^{4i+8}$. $A$ is not a root of unity. Thus, the image is infinite.

\item The closure of the image is $SL(V_{0, 4; a, b, c, d})$.

Assume the Zariski closure of the image is a semisimple lie algebra $G$. the action of $D_{\gamma_1}$ have the following eigenvalue set under some central extension:
$$X=\{A^{i^2}| i\in [\max\{|a-b|,|c-d|\}, \min\{|a+b|,|c+d|\}], i\equiv a-b \mod 2\}$$
Let's assume $i$ start at $i_0$ and end at ${i_n}$, There exist an element $a\in \g=Lie(G)$, the eigenvalues for $a$ acting on $V_{0, 4; a, b, c, d}$ is $\lambda_0=C+i_{0}^2, \lambda_1=C+(i_{0}+2)^2,...,\lambda_n=C+(i_n)^2$ for some constant $C$. We claim $\g$ is $\Sl_{n+1}$.

Notice in the weight space $\h^*$, $\lambda_0$ is the highest weight under some choice of simple roots. $\lambda_0-\lambda_1$ satisfying the following properties:
\begin{enumerate}
\item $\lambda_0-\lambda_1$ is not repeative, that is, $\lambda_0-\lambda_1\ne\lambda_k-\lambda_j$ for any $(k, j)\ne (0, 1)$.
\item $\lambda_0-\lambda_1$ is indecomposable, that is $\lambda_0-\lambda_1$ can not be written as positive integral linear combination of $\lambda_0-\lambda_k$ for $k\ne 1$.
\end{enumerate}

By Speyer's lemma, $\lambda_0-\lambda_1$ is a root. Consider the action of $W_{\lambda_0-\lambda_1}$ on the weight diagram. It send $\lambda_k$ to $\lambda_k$ plus(or minus) some copy of $\lambda_0-\lambda_1$. Because $\lambda_0-\lambda_1$ is not repeative, $W_{\lambda_0-\lambda_1}$ interchanges $\lambda_0$ and $\lambda_1$ and fix all other $\lambda_k$.

Then we prove for all $k$, $\lambda_0-\lambda_k$ is indecomposable. Assume otherwise, say $\lambda_0-\lambda_k=\sum_t\alpha_t(\lambda_0-\lambda_t)$. the decomposition is perserved by the Weyl group action. We have $\lambda_1-\lambda_k=\sum_t\alpha_t(\lambda_1-\lambda_t)$. These two equation together implies $\sum_t\alpha_t =1$. Thus $\lambda_0-\lambda_k$ is indecomposable for all $k$.

Use Speyer's lemma again, we know all $\lambda_0-\lambda_k$ are roots. Consider the action of $W_{\lambda_0-\lambda_k}$. It interchanges $\lambda_0$ and $\lambda_k$. Otherwise, $\lambda_0-W_{\lambda_0-\lambda_k}(\lambda_0)$ will be multiple of $\lambda_0-\lambda_k$. The action also have to fix all other $\lambda_i$. Assume otherwise, it send $\lambda_i$ to $\lambda_j$. then $\lambda_0-\lambda_j=\lambda_0-\lambda_i+(\lambda_i-\lambda_j)$ which is equal to $\lambda_0-\lambda_i$ plus multiple of $\lambda_0-\lambda_k$, so the indecomposable condition is contradicted.

${W_{\lambda_0-\lambda_k}}$ generate $S_{n+1}$. The lie algebra $\g$ have an $(n+1)$-dimensional irreducible representation and has $S_{n+1}$ as a quotient of a subgroup of its Weyl group. $\g$ have to be $\Sl_{n+1}$.
\end{itemize}
\end{proof}

To apply theorem \ref{surj}, we need the following proposition about irreducibility.

\begin{proposition}For $A$ any transcendental number, $V_{0, b;\vec{c}}$ is irreducible as a representation of $PB_{b}$.
\end{proposition}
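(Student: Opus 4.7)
The plan is to induct on $b$. The base case $b=4$ is handled by \lref{begin}, and the cases $b\le 3$ are trivial since $V_{0,b;\vec c}$ has dimension at most one. For $b\ge 5$, I will assume irreducibility of $V_{0,b';\vec{c}'}$ as a $PB_{b'}$ representation for every $b'<b$, and deduce the statement for $b$ by emulating the two-curve strategy of \lref{3}, with \lref{cross1} replaced by its directed analogue \lref{cross2} (since no Hermitian form is available when $A$ is generic).

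First I would choose two simple closed curves $\alpha,\beta\subset\Sigma_{0,b}$, each separating a 4-holed sub-sphere from a $(b-2)$-holed sub-sphere but cutting off different triples of boundary components: say $\alpha$ cuts off $\{c_1,c_2,c_3\}$ while $\beta$ cuts off $\{c_1,c_2,c_4\}$. Restricting along the inclusion homomorphism of \tref{inclusion} produces two decompositions
$$
V_{0,b;\vec c}=\bigoplus_i A_i,\qquad A_i\cong V_{0,4;c_1,c_2,c_3,i}\otimes V_{0,b-2;i,c_4,\ldots,c_b},
$$
$$
V_{0,b;\vec c}=\bigoplus_j B_j,\qquad B_j\cong V_{0,4;c_1,c_2,c_4,j}\otimes V_{0,b-2;j,c_3,c_5,\ldots,c_b}.
$$
By \lref{begin} and the induction hypothesis each tensor factor is irreducible under the corresponding pure braid group, so each $A_i$ and $B_j$ is irreducible under the stabilizer of $\alpha$ (resp.\ $\beta$) inside $PB_b$. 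Multiplicity-freeness is immediate since $D_\alpha$ acts on $A_i$ by the scalar $(-1)^i A^{i(i+2)}$ and these values are pairwise distinct when $A$ is not a root of unity.

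Next I would verify strong connectivity of the directed graph in \lref{cross2}. Extend $\alpha$ and $\beta$ to full pants decompositions of $\Sigma_{0,b}$ that agree outside a common 4-holed sub-sphere; the change of basis between the two resulting bases is then governed by a single tetrahedral 6j symbol attached to that sub-sphere. By the transcendence argument of \lref{begin}, such a coefficient is a nonzero rational function of $A$ whose leading monomial is realized by a unique summand, hence it does not vanish at any transcendental $A$. Consequently every $A_i$ projects nontrivially onto every admissible $B_j$, and symmetrically, so the bipartite graph is complete and strongly connected. \lref{cross2} then delivers the $PB_b$-irreducibility of $V_{0,b;\vec c}$.

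The main obstacle I anticipate is not the 6j non-vanishing --- which is strictly easier in the transcendental setting than in the root-of-unity setting, since there is no upper admissibility bound $i+j+c\le 2p-4$ to worry about --- but rather the combinatorial bookkeeping: one must verify that for every pair $(i,j)$ appearing in the ranges of the two decompositions there is at least one admissible coloring of the ambient pants decomposition on which the flip's 6j coefficient contributes, and that the two pants decompositions can indeed be chosen to differ by a single 4-holed-sphere flip. This is a routine inequality check in the curve complex of $\Sigma_{0,b}$.
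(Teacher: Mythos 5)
Your overall strategy (two decompositions coming from curves on the surface, with irreducibility following from connectivity of the bipartite graph of Lemma~\ref{cross2}) is the same as the paper's, but your choice of curves is genuinely different, and this is where a gap appears. You pick $\alpha$ and $\beta$ that each cut off a $4$-holed sub-sphere but share only two of the three marked boundaries ($\{c_1,c_2,c_3\}$ versus $\{c_1,c_2,c_4\}$), so $\alpha$ and $\beta$ necessarily \emph{intersect}. You then try to pass from the $\{A_i\}$ basis to the $\{B_j\}$ basis by a single tetrahedral flip and invoke the $6j$ non-vanishing argument of Lemma~\ref{begin}. That non-vanishing is fine, but it only tells you that a \emph{given} tree-$1$ basis vector with $\alpha$-edge color $i$ has a nonzero $B_j$-component provided the corresponding tree-$2$ coloring is admissible. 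To conclude that $A_i$ projects nontrivially to $B_j$, you must exhibit a coloring of the remaining edges (in particular the edges $e_1$ between $(c_1,c_2)$ and $e_3$ on the far side of the flip) that is \emph{simultaneously} admissible for both trees — i.e.\ the intervals $[\,|i-c_3|,\,i+c_3\,]\cap[\,|j-c_4|,\,j+c_4\,]\cap[\,|c_1-c_2|,\,c_1+c_2\,]$ and the analogous condition at $e_3$ must be nonempty for each pair $(i,j)$. You label this ``a routine inequality check'' but do not carry it out, and it is not obviously true; moreover you upgrade this unverified step to the claim that the bipartite graph is \emph{complete}, which is stronger than needed and also unjustified.

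The paper sidesteps all of this by taking $\alpha$ and $\beta$ to be \emph{disjoint}, as in Lemma~\ref{3}: one picks a pair of pants inside $\Sigma_{0,b}$ with one cuff a marked boundary colored $a$ and the other two cuffs being $\alpha$ and $\beta$. Since $\alpha$ and $\beta$ can be cut simultaneously, one obtains an explicit vector in $A_i\cap B_j$ whenever $(i,j,a)$ is an admissible triple — no $6j$ symbols are needed, and nonempty intersection gives a directed edge in \emph{both} directions, which is exactly the observation the paper records. Connectivity then follows from the purely combinatorial ``claim'' already proved inside Lemma~\ref{3} (and in the transcendental case the argument only gets easier, since the upper bound $i+j+a\le 2p-4$ disappears). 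So the paper's route avoids the admissibility-bookkeeping problem entirely. Your approach could very plausibly be completed, but as written the key admissibility step is a genuine gap, and it would cost more effort than simply reusing the disjoint-curve combinatorics of Lemma~\ref{3} as the paper does.

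Two smaller points. First, for $b\le 3$ the spaces are at most one-dimensional, and for $b=4$ irreducibility is Lemma~\ref{begin}; you have this right. Second, your multiplicity-freeness argument via distinct $D_\alpha$-eigenvalues $(-1)^iA^{i(i+2)}$ is correct and matches the paper's reasoning.
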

\begin{proof}The proof is identical to lemma \ref{3}. Notice that we find a element in the intersection of $V_i$ and $W_j$. In transcendental case, this means we have directed edges of both direction.
\end{proof}

The following lemma gives a criterion that for some specific decomposition, the components is not isomorphic nor dual to each other.

\begin{lemma}
\label{noniso}Let $A$ be a transcendental number, and $c_1, c_2...c_n$ be a sequence of fixed non-decreasing natural numbers and $n\ge 3$. Then there is no pair of elements in $\{V_{0, n+1; a, c_1,..c_n}| a\le 2c_1\}$ are isomorphic or dual to each other.
\end{lemma}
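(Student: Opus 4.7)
My plan is to distinguish the representations via the spectrum of a single, carefully chosen Dehn twist. Let $\alpha\subset\Sigma_{0,n+1}$ be a simple closed curve separating the boundary colored $a$ together with the boundary colored $c_1$ from the remaining $n-1\geq 2$ boundaries, so that $D=T_\alpha$ is a non-trivial element of $\widetilde{PB_n}$. Cutting along $\alpha$ gives the pants-type decomposition
\[
V_{0,n+1;a,c_1,\dots,c_n}\;=\;\bigoplus_i V_{0,3;a,c_1,i}\otimes V_{0,n-1;i,c_2,\dots,c_n},
\]
on which $D$ acts on the color-$i$ piece by the scalar $\epsilon_i=(-1)^i A^{i(i+2)}$. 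The piece is nonzero exactly when $i$ lies in $T_a:=S_a\cap R$, where $S_a=\{|a-c_1|,|a-c_1|+2,\dots,a+c_1\}$ depends on $a$ and $R=\mathrm{supp}(V_{c_2}\otimes\cdots\otimes V_{c_n})$ is independent of $a$; moreover the multiplicity $M_i=\dim V_{0,n-1;i,c_2,\dots,c_n}$ of the eigenvalue $\epsilon_i$ depends only on $i$.

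Next I would translate ``projectively isomorphic'' (resp.\ ``dual'') into a constraint on the multisets. A projective isomorphism $V_a\cong V_{a'}$ (resp.\ $V_a\cong V_{a'}^*$) requires a bijection $\sigma\colon T_a\to T_{a'}$ with $M_{\sigma(i)}=M_i$ and a scalar $\lambda$ satisfying $\lambda\epsilon_i=\epsilon_{\sigma(i)}$ (resp.\ $\lambda=\epsilon_i\epsilon_{\sigma(i)}$). Because $A$ is transcendental and the eigenvalues $\epsilon_i$ involve only $A^{i(i+2)}$ and the sign $(-1)^i$, the requirement that $\lambda$ be constant in $i$ is equivalent to the purely algebraic condition
\[
(\sigma(i)+1)^2\mp(i+1)^2\;=\;\text{constant}
\]
(minus for isomorphism, plus for duality). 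Since $T_a,T_{a'}$ are arithmetic progressions of step $2$ and $\sigma$ is monotone (increasing for isomorphism, decreasing for duality), applying this constraint to consecutive pairs $i,i+2$ yields in the isomorphism case that the constant vanishes, hence $\sigma=\mathrm{id}$ and $T_a=T_{a'}$; in the duality case with $|T_a|\geq 3$ the equations for three consecutive pairs are incompatible, giving a contradiction. The small cases $|T_a|=2$ (duality) and $|T_a|=1$ are handled by direct inspection: the former again forces $T_a=T_{a'}$, and the latter means $D$ acts as a scalar and must be treated separately.

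Finally, I would show that $T_a=T_{a'}$ with $a,a'\leq 2c_1$ forces $a=a'$. The endpoints
\[
\min T_a=\max(|a-c_1|,r_{\min}),\qquad \max T_a=\min(a+c_1,r_{\max}),
\]
together with the inequality $r_{\max}=c_2+\cdots+c_n\geq(n-1)c_1\geq 2c_1$ (using $c_1\leq c_k$ and $n\geq 3$), permit a short case analysis on whether the top and bottom of $S_a$ are clipped by $R$; each case recovers $a$ from $T_a$. For the edge case $|T_a|=1$ where the spectrum gives no information, one observes that $|T_a|=1$ with $\dim V_a>0$ and $a\leq 2c_1$ can occur only for $a=0$ (when $c_1\in R$, forcing $r_{\min}\leq c_1$) or for $a=r_{\min}-c_1$ (when $c_1\notin R$, forcing $r_{\min}>c_1$), and these two alternatives are mutually exclusive, so at most one admissible $a$ has $|T_a|=1$.

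\textbf{Main obstacle.} The heart of the argument is the algebraic consequence of transcendence, which is clean; the technical work lies in the final step showing $T_a=T_{a'}\Rightarrow a=a'$ under the clipping, and in ruling out pairs of distinct admissible $a$'s with $|T_a|=|T_{a'}|=1$. The inequality $r_{\max}\geq 2c_1$ afforded by $n\geq 3$ and $c_1\leq c_k$ is precisely what makes these cases manageable; without the hypothesis $a\leq 2c_1$ the clipping analysis would fail.
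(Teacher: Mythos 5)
Your proposal follows the same route as the paper: decompose along the Dehn twist $T_\alpha$ around the curve separating the $a$- and $c_1$-colored boundaries from the rest, and distinguish the representations via the spectrum of $T_\alpha$, which by transcendence of $A$ reduces to the combinatorics of the intervals $T_a$. Where your write-up improves on the paper's terse argument is worth pointing out. The paper asserts that equal cardinality of $E_a$ together with equal ratio sets forces the intervals to coincide; this is fine for $|T_a|\geq 2$, but when $|T_a|=|T_{a'}|=1$ the ratio set is $\{1\}$ in both cases and the argument yields nothing — you correctly flag this edge case (where $T_\alpha$ acts as a scalar) and handle it by showing at most one admissible $a\leq 2c_1$ can have $|T_a|=1$. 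You also carry out the monotonicity/bijection $\sigma$ argument and the isomorphism-versus-duality dichotomy explicitly, whereas the paper implicitly uses that the ratio set is closed under inversion so both cases are covered at once. Your final clipping analysis of $\min T_a$ and $\max T_a$, using $r_{\max}\geq (n-1)c_1\geq 2c_1$, is exactly the content of the paper's one-line claim that "at least one boundary of the interval is determined by $a$." One small typo: the right-hand tensor factor in your decomposition should be $V_{0,n;i,c_2,\dots,c_n}$ (an $n$-holed sphere), not $V_{0,n-1;i,c_2,\dots,c_n}$.
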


\begin{tpic}
\draw (-1.5, 0) to (4.5, 0);
\draw (-1.5, 0) to[bend right] (-1.5, 1)node[above]{$a$};
\draw (-1.5, 0) to[bend left] (-1.5, 1);
\draw (1.5, 0) to[bend right]  (1.5, 1)node[above]{$\gamma$};
\draw[dashed] (1.5, 0) to[bend left] (1.5, 1);
\draw (0, 2) ellipse (0.6 and 0.2);
\draw (1.5, 1) ..controls (0.6, 1) and (0.6, 1).. (0.6, 2) node[right]{$c_1$};
\draw (-1.5, 1) ..controls (-0.6, 1) and (-0.6, 1).. (-0.6, 2);
\draw (-1.5, 1) to (-1.5, 1);

\draw (4.5, 1) ..controls (3.6, 1) and (3.6, 1).. (3.6, 2) node[right]{$c_2$};
\draw (1.5, 1) ..controls (2.4, 1) and (2.4, 1).. (2.4, 2);
\draw (3, 2) ellipse (0.6 and 0.2);

\fill (4.5, 0.5)circle (0.04);
\fill (4.65, 0.5)circle (0.04);
\fill (4.8, 0.5)circle (0.04);

\end{tpic}

\begin{proof}
Consider the Dehn twist around the curve $\gamma$ that bounds the first two boundaries. The eigenvalue set will be 
$E_a=\{A^{i^2}|\max\{c_n-\sum_{k=2}^{n-1}c_k,|a-c_1|\}\le i\le\min\{a+c_1, \sum_{k=2}^{n}c_k\}\}$ up to some central extension. For different $a$, we claim either the cardinality of the eigenvalue set will be different or the set of ratios of eigenvalues are different. Set of ratios of eigenvalues is an invariant of central extensions, so the claim implies the lemma.

Suppose $a, a'\le c_1$ such that $E_a$ and $E_a'$ have the same cardinality. Then the interval for $i$ will be of the same length. To make the set of ratios of eigenvalues the same, the interval should start and end at the same place. Since we assumed $a, a'\le 2c_1\le 2c_2$, so at least one of the boundary of the interval is determined by $a$ or $a'$. Thus we proved the claim.
\end{proof}

Now we can give the proof of theorem \ref{dense} by induction:
\begin{proof}
The case number of boundary components $b=4$ is proved in lemma \ref{begin}.

Suppose the Zariski closure of the image of $PB_i$ in $PSL(V_{0, i; \vec{c}})$ is surjective for all $i\le n$ and coloring $\vec{c}$. we prove $PB_{n+1}$ has a dense image in $PSL(V_{0, n+1; \vec{c'}})$ for any $\vec{c'}$.

Let $G$ be the Zariski closure of $PB_{n+1}$. $G$ contains elements of infinite order, so $G$ must be of 
positive dimension. $G$ is generated by 1-dimensional subgroups that densely generated by Dehn twists, so $G$ is connected.

Without loss of generality, assume $\vec{c'}=(c_0, c_1, c_2...c_n)$ such that $c_i\le c_j$ if $i\le j$. Then we can have a decomposition of $V_{0, n+1; \vec{c'}}$ by restricting to a subgroup $H=\M(S')\cong PB_n$.
$$V_{0, n+1; \vec{c'}}=\bigoplus_{i=c_1-c_0}^{c_0+c_1}V_{0, n; i, c_2, c_3,...c_n}$$
According to lemma \ref{noniso}, all summands are not isomorphic nor dual to each other. 

To apply theorem \ref{surj}, we prove the dimension of the summands satisfying (2) of theorem \ref{surj}.

If the number of boundary component $b\ge 4$, the dimension of the vector spaces $V_{0, b; \vec{c}}$ have dimension $1$ if and only if one of the colors is equal to the summation of colors on all other boundaries. In our case, $b\ge 5$, $n\ge 4$, $i\le c_0+c_1$. $\dim V_{0, n; i, c_2, c_3,...c_n}=1$ if and only if $i=c_n-c_{n-1}-...-c_2$, so it happens at most once.

Next, we consider the 2-dimensional summands. We fix a uni-trivalent tree $\Gamma$ with $n$ boundary vertex colored by $\vec{c}$. If $\dim V_{0, n;\vec{c}}=2$, then we have 2 admissible coloring for $\Gamma$. Fix an edge $e$ that have different colors $a$ and $a+2$ in the two different admissible colorings. Cut the tree at $e$, the tree $\Gamma$ split to $\Gamma_1$ and $\Gamma_2$, and coloring $e$ by $a$ and $a+2$ both give unique admissible coloring for $\Gamma_1$ and $\Gamma_2$. This implies $\Gamma_1$ and $\Gamma_2$ can only have $3$ boundary vertices and $n=4$. When $n=4$, we can check by hand that at most for only one $i\le c_0+c_1$, $\dim V_{0, n; i, c_2, c_3,...c_n}=2$. 

By theorem \ref{surj}, $G$ surjects onto $PSL(V_{0, n+1; \vec{c'}})$.

\end{proof}

\bibliographystyle{plain}
\bibliography{irr,qa,gt}

\end{document}